\title{The lattice points of a stacky polytope}
\author{Hironori Sakai}
\date{}
\newtheorem{thm}{Theorem}[section]
\newtheorem{prop}[thm]{Proposition}
\newtheorem{lem}[thm]{Lemma}
\theoremstyle{definition}
\newtheorem{dfn}[thm]{Definition}
\newtheorem{assumption}[thm]{Assumption}
\theoremstyle{remark}
\newtheorem{remark}[thm]{Remark}
\newcommand{\squashup}[0]{\vspace{-3pt}}
\renewcommand{\epsilon}[0]{\varepsilon}
\renewcommand{\phi}[0]{\varphi}
\renewcommand{\tilde}[1]{\widetilde{#1}}
\renewcommand{\emph}[1]{\textbf{#1}}
\newcommand{\B}{\mathfrak{B}}
\newcommand{\C}{\mathbb{C}}
\renewcommand{\L}{\mathcal{L}}
\renewcommand{\P}{\mathcal{P}}
\newcommand{\Q}{\mathcal{Q}}
\newcommand{\R}{\mathbb{R}}
\newcommand{\T}{\mathbb{T}}
\newcommand{\X}{\mathcal{X}}
\newcommand{\Y}{\mathcal{Y}}
\newcommand{\Z}{\mathbb{Z}}
\def\i{\mathtt   i}  % imaginary unit
\def\g{\mathfrak g}  % Lie algebra of G
\newcommand{\Diff}{\mathfrak{Diff}}
\newcommand{\St}{\mathrm{St}}
\newcommand{\BG}{\mathfrak{B}G}
\newcommand{\BH}{\mathfrak{B}H}
\newcommand{\Cinf}{\mathcal{C}^\infty}
\newcommand{\Omegabas}{\Omega_\mathrm{bas}}
\newcommand{\acted}[0]{\curvearrowleft}
\DeclareMathOperator{\Hom}{Hom}
\DeclareMathOperator{\id}{id}
\DeclareMathOperator{\Spec}{Spec}
\DeclareMathOperator{\src}{src}
\DeclareMathOperator{\tgt}{tgt}
\DeclareMathOperator{\ch}{ch}
\DeclareMathOperator{\Td}{Td}
\newcommand{\GL}[0]{\mathrm{GL}}
\newcommand{\Ob}[1]{\mathrm{Ob}(#1)}
\newcommand{\Ar}[1]{\mathrm{Ar}(#1)}
\newcommand{\pt}[0]{\mathrm{pt}}
\newcommand{\pr}[0]{\mathrm{pr}}
\newcommand{\dual}[0]{{\scriptscriptstyle\vee}}
\newcommand{\Lie}[0]{\mathrm{Lie}}
\newcommand{\lm}[0]{\mu^{-1}(\tau)}
\newcommand{\pair}[1]{\langle#1\rangle}
\renewcommand{\bar}[1]{\overline{#1}}
\newcommand{\DG}[0]{\mathrm{DG}}
\newcommand{\Xt}[0]{\X_\tau}
\newcommand{\XD}[0]{\X_\Delta}
\newcommand{\omegastd}[0]{\omega_\mathrm{std}}
\newcommand{\vb}[1]{\mathbf{#1}}
\newcommand{\CP}[0]{\mathbb{CP}}
\begin{document}

\maketitle

\begin{abstract}
 We study the relation between the lattice points of a stacky polytope and a prequantisation of the stack associated to the stacky polytope. We introduce a prequantisation of a Deligne--Mumford stack and discuss the uniqueness and the existence of a prequantisation. After that we describe explicitly the condition for the existence of a prequantisation in terms of stacky polytope under some conditions and discuss the relation between the holomorphic line bundle associated to a prequantisation and the lattice points of the stacky polytope.
\end{abstract}

\section{Introduction} %-------------------------------------------------------
\label{sec:introduction}

It is well-known that the dimension of the quantisation space of a compact (quantisable symplectic) toric manifold agrees with the number of the lattice points of the moment polytope. This theorem is proved by Danilov in terms of algebraic geometry \cite{danilov78:_geomet}. A very simple proof of the theorem is also known \cite{hamilton08}.

Consider a complex line bundle $L$ of $M$ together with a connection whose curvature form agrees with the symplectic form of $M$. Such a connection gives a holomorphic structure to the line bundle and the quantisation space of $M$ is defined as the virtual vector space $\sum_q (-1)^qH^q(M,L)$. By the Demazure vanishing theorem $H^q(M,L)$ vanishes for $q>0$.

On the other hand, a toric manifold $M$ admits a moment map $\mu:M \to \R^d$ ($2d:=\dim M$) and its image $\Delta$ is called the moment polytope of $\mu$. The statement mentioned at the beginning says that the identity 
\[
 \dim H^0(M,L) = \sharp(\Delta \cap \Z^d)
\]
holds.

The theorem does not extend to (compact) symplectic toric Deligne--Mumford stacks \cite{lerman12:_hamil_delig_mumfor} in a straightforward way, because a Delzant-type classification of symplectic toric Deligne--Mumford stacks is not known. In fact we can reconstruct a toric manifold from its moment polytope and the reconstruction is important to prove the above identity.

 However we have a combinatorial construction of toric Deligne--Mumford stacks: stacky fans \cite{borisov05:_chow_delig_mumfor} and stacky polytopes \cite{sakai13:_delig_mumfor}. While they do not give a classification for toric Deligne--Mumford stacks, they give a combinatorial construction of a wide class of Deligne--Mumford stacks. 

In this paper we study an extension of the theorem in the framework of stacky polytopes. The basic idea is to extend the theory of prequantisations to stacks. The stack associated to a stacky polytope has two special atlases. One gives a symplectic structure while another gives a holomorphic structure. The former is important to see the existence of prequantisations and the topology of the stack and the latter is needed to discuss holomorphic line bundles. Therefore we define a prequantisation without any atlases and discuss the relation between the two atlases.

We explain very briefly our main theorem (Theorem \ref{thm:main theorem}). A stacky polytope $(N,\Delta,\beta)$ consists of a $\Z$-module $N$, a polytope $\Delta \subset(N\otimes_\Z\R)^\vee$ and a homomorphism of $\Z$-modules $\beta:\Z^d \to N$ (Definition \ref{dfn:stacky polytope}). Here $d$ is the number of facets of $\Delta$. The $\Z$-module $N^\vee$ is the lattice in $(N\otimes_\Z\R)^\vee$. We can construct a symplectic Deligne--Mumford stack $\XD$ from the stacky polytope $(N,\Delta,\beta)$. We require the stacky polytope to satisfy two conditions (Assumption \ref{assumption: for stacky polytope}). The first condition is technical and the second condition guarantees the existence of a prequantisation (under the first condition). The main theorem states that 
 \[
 \dim H^0(\XD,\L) = 
 \begin{cases}
  \sharp(\Delta \cap N^{\!\vee}) & \text{if}\ c_\nu \in \Z\ (\nu = 1,\dots,d),
  \\
  0 & \text{else}.
 \end{cases}
 \]
Here $\L$ is the holomorphic line bundle associated to the prequantisation and $c_1,\dots,c_d$ are determined by the description (\ref{eqn:polytope-as-intersection-of-hyperplanes}) of the polytope $\Delta$.

In the case of toric manifolds, the condition ``$c_1,\dots,c_d \in \Z$'' is equivalent to the existence of prequantisation and the dimension of the space of holomorphic sections is always equal to the number of the lattice points of the polytope under the condition. However the condition is just a sufficient condition for the case of the stack $\XD$. 

This paper is organised as follows. In §\ref{sec:stacks-and-polytopes} we review briefly the theory of Deligne--Mumford (quotient) stacks and stacky polytopes. We introduce a prequantisation of a (DM quotient) stack in §\ref{sec:prequantisation}. We discuss existence and uniqueness for a prequantisation under several assumptions which holds for the stack associated to a stacky polytope. We also discuss an explicit condition for the existence of a prequantisation. After we give a holomorphic structure to the stack $\XD$ associated to the stacky polytope $(N,\Delta,\beta)$, we prove the main theorem in §\ref{sec:main section}. After we look at small examples, we discuss the relation between our main theorem and the Hirzebruch--Riemann--Roch theorem for stacks. 

\paragraph{Notation} 
Throughout this paper we use the following notation.

\begin{longtable}{ll}
 $\T$ & the 1-dimensional compact torus $\R/\Z$. We denote by $[\theta]$ the image of $\theta \in \R$ in $\T$. \\
 $\T_\C$ & the 1-dimensional complex torus $\C/\Z$ \\
 $\C^\times$ & the multiplicative group $\GL_1\C (= \C\setminus{0})$ \\
 $\vb e_1,\dots,\vb e_d$ & the standard basis of $\R^d$ \\ 
 $\vb e^1,\dots,\vb e^d$ & the dual basis of $\vb e_1,\dots,\vb e_d$ \\ 
 $\i$ & $\sqrt{-1}$ \\
 $z_1,\dots,z_d$ & the standard complex coordinate functions of $\C^d$ \\
% $x_\nu$ and $y_\nu$ & $z_\nu = x_\nu + \i y_\nu$ \\
 $X_1 \rightrightarrows X_0$ & the groupoid with $\Ob{X_1 \rightrightarrows X_0}=X_0$ and $\Ar{X_1 \rightrightarrows X_0}=X_1$ \\
 $BG$ & the classifying space of a Lie group $G$ \\
 $EG$ & a contractible space equipped with a free action of a Lie group $G$ \\
 $\g$ & the Lie algebra of the Lie group $G$ \\
% $\exp$ & the exponential map of a Lie group \\
 $e$ & the base of the natural logarithm (i.e. not the exponential map of a Lie group) \\
 $\pair{\ ,\ }$ & the natural pairing (i.e. $V^\vee \times V \to \R$ for a real vector space $V$.) \\
% $\pair{p}$ & the ideal generated by $p$. \\ 
 $[\eta]$ & the de Rham cohomology class of a differential form $\eta$ \\
 $[\eta]_G$ & the cohomology class of $\eta$ in the Cartan model \\
 $\Z_G$ & the integral lattice: the kernel of $\exp:\g \to G$. ($G$ is a compact torus)\\
 $\Z_G^\vee$ & the weight lattice $\{w\in\g^\vee\mid\pair{w,\Z_G}\subset\Z\}$ ($G$ is a compact torus.)\\
% $\pt$ & the space consisting of one point \\
% $\V(J)$ & the algebraic set defined by the ideal $J$ 
\end{longtable}

 The notation ``${}^\vee$'' always means ``dual'' in the usual sense: $A^\vee := \Hom_{\Z}(A,\Z)$ for a $\Z$-module $A$ and $V^\vee := \Hom_{\R}(V,\R)$ for an $\R$-vector space $V$.

%\paragraph{Acknowledgement}
%The author thanks Reyer Sjamaar for raising the problem about the Danilov's theorem for toric stacks several years ago. The author also thanks Benjamin Collas for many discussions about Deligne--Mumford stacks. This work is supported by Professor Michael Weiss' Humboldt Professorship.

\section{Quotient stacks and stacky polytopes}
\label{sec:stacks-and-polytopes}%--------------------

In this section we review briefly facts about quotient stacks and stacky polytopes \cite{sakai13:_delig_mumfor}.

\subsection{A differentiable stack}

Because we deal mainly with quotient stacks, we omit details of differentiable stacks. Details can be found in Behrend--Xu \cite{behrend11:_differ} or Metzler \cite{metzler03:_topol_smoot_stack} for example. For Lie groupoids, see Moerdijk \cite{moerdijk02:_orbif_group}, for example.

We denote by $\Diff$ the category of smooth manifolds and smooth maps. A stack (over $\Diff$) is a category fibred in groupoids over $\Diff$ which satisfies the descends condition. A stack consists of a category $\X$ and a functor $F_\X : \X \to \Diff$, however we will write it $\X$ for simplicity.

By definition, for each $U \in \Diff$ the fibre $\X_U$ over $U$ forms a groupoid. Here the classes of objects and arrows are defined by
\[
 \Ob{\X_U} = \{ x \in \X \mid F_\X(x)=U \}
 \quad\text{and}\quad
 \Ar{\X_U} = \{ a \in \Ar\X \mid F_\X(a)=\id_U \}
\]
respectively.

Stacks form a $(2,1)$-category $\St(\Diff)$. Namely for any stacks $\X$ and $\Y$ the class $\Hom(\X,\Y)$ of morphisms of stacks is a groupoid. Two stacks $\X$ and $\Y$ are said to be equivalent if there are two morphisms of stacks $f:\X \to \Y$ and $g:\Y \to \X$ such that $g \circ f \Leftrightarrow \id_\X$ in $\Hom(\X,\X)$ and $f \circ g  \Leftrightarrow \id_\Y$ in $\Hom(\Y,\Y)$.

By the 2-Yoneda lemma, the category $\Diff$ can be embedded into $\St(\Diff)$ as a full subcategory. Here a manifold $M$ is identified with the slice category $(\Diff \downarrow M)$. Thus $M_U$ for $U \in \Diff$ means the discrete category $\Cinf(U,M)$. A stack $\X$ is said to be representable if there is a manifold which is equivalent to $\X$.

A representable morphism $\phi$ from a manifold to a stack $\X$ is called an atlas of $\X$ if $\phi$ is a surjective submersion. Let $\phi:X_0 \to \X$ be an atlas and $X_1$ a manifold equivalent to the pullback $X_0 \times_\X X_0$. The Lie groupoid $X_1 \rightrightarrows X_0$ is said to be associated to the atlas $\phi:X_0 \to \X$.
\[
 \xymatrix@C=40pt{
 X_1 \ar[d]_\src \ar[r]^\tgt  & X_0 \ar[d]^\phi \\
 X_0 \ar[r]_\phi & \X 
 }
\]
The stack $\X$ is said to be differentiable if the Lie groupoid is proper: the map $(\src,\tgt):X_1 \to X_0 \times X_0$ is proper. This definition is independent of the choice of an atlas. A stack is said to be Deligne--Mumford (DM) if it admits an atlas such that the associated Lie groupoid is a proper étale groupoid. %The stack $\X$ is said to be compact if the orbit space $X_0/X_1$ of the Lie groupoid is compact. 

In this paper, we study the geometry and topology of a differentiable stack $\X$ through the Lie groupoid $X_1 \rightrightarrows X_0$ associated to an atlas. In particular, the homotopy type of $\X$ means that the homotopy type of the classifying space of the Lie groupoid. 

\subsection{A quotient stack}
\label{subsec:quatient stack}

Let $G$ be a Lie group. We mainly consider right $G$-manifolds, while we deal with a $G$-representation as a left $G$-module.

%We often translate a left $G$-manifold into a right $G$-manifold by the usual formula
%\[
% M \acted G;\ m \cdot g := g^{-1} \cdot m
%\]

%Denote by $\g$ the Lie algebra of $G$.

We define the quotient stack $[M/G]$ for a $G$-manifold $M$ as follows. The class of objects of the fibre $[M/G]_U$ over $U \in \Diff$ consists of pairs $(\pi^P,\epsilon^P)$ of a principal $G$-bundle $\pi^P:P \to U$ and a $G$-equivariant map $\epsilon^P$ from the total space $P$ to $M$:
\[
 \xymatrix{ U & P \ar[l]_(.4){\pi^P} \ar[r]^{\epsilon^P} & M}.
\]
An arrow from $(\pi^Q,\epsilon^Q)$ to $(\pi^P,\epsilon^P)$ is a pair $(f,\tilde f)$ of a map $f:V \to U$ and a $G$-equivariant map $\tilde f:Q \to P$ which make the diagram 
\[
 \xymatrix{
 U & P \ar[l]_{\pi^P} \ar[r]^{\epsilon^P} & M  \\
 V \ar[u]_f & Q \ar[l]^{\pi^Q}\ar[u]_{\tilde f}\ar[r]_{\epsilon^Q} & M \ar[u]_{\id_M}.
 }
\]
commutative.

The quotient stack $[M/G]$ has a natural atlas $\pi^M:M\to [M/G]$. The morphism $\pi^M$ assigns to each map $m:U \to M$ the diagram
\[
 \xymatrix@C=60pt{
 U & U \times G \ar[l]_{\pr_1} \ar[r]^{(u,g) \mapsto m(u) \cdot g} & M
 }.
\]
The Lie groupoid associated to the atlas is the action groupoid $M \times G \rightrightarrows M$. Therefore if the $G$-action on $M$ is proper, then the quotient stack $[M/G]$ is differentiable. Moreover $[M/G]$ is DM if the $G$-action is additionally locally free.

The stack $[\pt/G]$ is called the classifying stack of $G$ and denoted by $\BG$. There is a unique $G$-equivariant map $M \to \pt$ and this induces a morphism of stacks $F_M: [M/G] \to \BG$. The morphism $F_M$ is defined by 
\[
 \bigl(\xymatrix@C=20pt{U & P \ar[l]_(.4){\pi}\ar[r]^(.4){\epsilon} & M}\bigr)
 \mapsto
 \bigl(\xymatrix@C=20pt{U & P \ar[l]_(.4){\pi}\ar[r] & \pt}\bigr)
\]
for objects. 

Let $H$ be a closed subgroup of $G$. Every $G$-manifold $M$ is naturally an $H$-manifold and we have a morphism of stacks $\phi_M: [M/H] \to [M/G]$ defined by the formula 
\[
 \bigl(\xymatrix@C=20pt{U & Q \ar[l]_(.4){\pi^Q} \ar[r]^{\epsilon^Q} & M}\bigr)
 \mapsto
 \bigl(\xymatrix@C=20pt{U & Q \times_H G \ar[l]_(.55){\pi'} \ar[r]^(.6){\epsilon'} & M}\bigr)
\]
for objects. $\pi'$ and $\epsilon'$ are defined by $\pi'([q,g])=\pi^Q(q)$ and $\epsilon'([q,g]) = \epsilon^Q(q)g$ ($q \in Q$ and $g \in G$) respectively.

\begin{lem}
 \label{lem:basechange-diagram}
 The following diagram is 2-cartesian:
 \[
 \xymatrix{
 [M/H] \ar[r]^{\phi_M} \ar[d]_{F_M} & [M/G] \ar[d]^{F_M} \\
 \BH \ar[r]_{\phi_\pt} & \BG. \\
 }
 \]
\end{lem}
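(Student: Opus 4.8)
The plan is to identify $[M/H]$ with the 2-fibre product $\mathcal{Z} := [M/G] \times_{\BG} \BH$. Unwinding the standard model of the 2-fibre product, an object of $\mathcal{Z}$ over $U \in \Diff$ is a quadruple $(\pi^P, \epsilon^P, \pi^R, \psi)$, where $(\pi^P : P \to U,\ \epsilon^P : P \to M)$ is an object of $[M/G]_U$, $\pi^R : R \to U$ is a principal $H$-bundle (an object of $\BH_U$), and $\psi : R \times_H G \xrightarrow{\sim} P$ is an isomorphism of principal $G$-bundles over $U$; indeed this last datum is exactly a 2-isomorphism in $\BG_U$ between the two images $F_M(\pi^P, \epsilon^P) = (P \to U)$ and $\phi_\pt(\pi^R) = (R \times_H G \to U)$, and morphisms of $\mathcal{Z}$ are pairs compatible with these data. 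The square in the statement, together with the identity 2-morphism filling it, induces a comparison morphism $\Phi : [M/H] \to \mathcal{Z}$; on objects it sends $(\pi^Q : Q \to U,\ \epsilon^Q : Q \to M)$ to $(Q \times_H G,\ \epsilon',\ Q,\ \id_{Q \times_H G})$ with $\epsilon'([q,g]) = \epsilon^Q(q)\,g$. I would then prove that $\Phi$ is an equivalence of stacks.

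For this I would write down an explicit quasi-inverse $\Psi : \mathcal{Z} \to [M/H]$. Given $(\pi^P, \epsilon^P, \pi^R, \psi)$ over $U$, the assignment $r \mapsto [r, 1]$ is an $H$-equivariant map $R \to R \times_H G$, where $H$ acts on the target through $H \subset G$ and the principal $G$-action $[r, g] \cdot g' = [r, g g']$; composing it with $\psi$ and $\epsilon^P$ gives an $H$-equivariant map $\epsilon^R : R \to M$, and one sets $\Psi(\pi^P, \epsilon^P, \pi^R, \psi) := (\pi^R : R \to U,\ \epsilon^R)$, an object of $[M/H]_U$, with the evident action on morphisms. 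Both $\Phi$ and $\Psi$ are manifestly compatible with pullback along an arbitrary map $f : V \to U$, so they are genuine morphisms of stacks.

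It then remains to exhibit natural 2-isomorphisms $\Psi \circ \Phi \Rightarrow \id_{[M/H]}$ and $\Phi \circ \Psi \Rightarrow \id_{\mathcal{Z}}$. The first is the identity: $\Psi\Phi(Q, \epsilon^Q)$ has underlying bundle $Q$ and structure map $q \mapsto [q, 1] \mapsto \epsilon'([q,1]) = \epsilon^Q(q)$. For the second, $\Phi\Psi(\pi^P, \epsilon^P, \pi^R, \psi)$ unwinds to $(R \times_H G,\ \epsilon^P \circ \psi,\ R,\ \id)$, and $\psi : R \times_H G \to P$ is itself an isomorphism in $\mathcal{Z}$ from this quadruple to $(\pi^P, \epsilon^P, \pi^R, \psi)$, naturally in the argument. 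Hence $\Phi$ is an equivalence and the square is 2-cartesian.

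I expect the only genuinely delicate point to be the bookkeeping of the two group actions under the associated-bundle construction $R \mapsto R \times_H G$: that $r \mapsto [r, 1]$ is $H$-equivariant for the indicated actions, that $\epsilon^R$ is well defined and $H$-equivariant, and that the 2-morphism filling the square in the statement is the canonical identity (so that $\Phi$ is really the induced morphism). Everything else is a routine unwinding of definitions. Alternatively one could avoid the explicit model of $\mathcal{Z}$ and verify the universal property of the 2-fibre product directly, but the computation sketched above is more transparent.
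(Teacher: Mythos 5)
Your proposal is correct and follows essentially the same route as the paper: you identify $[M/H]$ with the 2-fibre product via the induced comparison morphism and write down the same explicit quasi-inverse, sending $(\pi^P,\epsilon^P,\pi^R,\psi)$ to $\bigl(\pi^R,\ r\mapsto(\epsilon^P\circ\psi)([r,1])\bigr)$, which is exactly the formula in the paper's proof. The only difference is that you spell out the model of the fibre product and the two natural 2-isomorphisms, which the paper leaves implicit.
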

\begin{proof}
 The commutative diagram induces a morphism of stacks $[M/H] \to \BH \times_{\BG} [M/G]$. The inverse of the morphism is given by 
 \[
 \BH \times_{\BG} [M/G] \to [M/H] ;\
 \bigl(\pi^Q,(\pi^P,\epsilon^P),\alpha:Q \times_H G \to P\bigr)
 \mapsto
 \bigl(\pi^Q, q \mapsto (\epsilon^P\circ\alpha)([q,1])\bigr)
 \]
 for objects and an arrow $(a,b)$ in $\BH\times_{\BG} [M/G]$ is sent to the arrow $a$ in $[M/H]$.
\end{proof}

%\begin{lem}
% \label{lem:fibred product of two quotient stacks}
% The diagram 
% \[
% \xymatrix{
% [(M_1 \times M_2)/G] \ar[r] \ar[d] & [M_2/G] \ar[d]^{F_{M_2}} \\ 
% [M_1/G] \ar[r]_{F_{M_1}} &  \BG
% }
% \]
% is 2-cartesian.
%\end{lem}

\subsection{Cohomology of a DM quotient stack}
\label{subsec:coh of a quot stack}

Since the classifying space of the action groupoid is (homotopically equivalent to) the Borel construction $EG \times_G M$ of the $G$-manifold $M$, the cohomology of the quotient stack $[M/G]$ is the equivariant cohomology of $M$. 

The complex of $G$-basic forms:
\[
 \Omegabas^*(M) := 
 \bigl\{\eta \in \Omega^*(M)^G \!\ \big|\!\
 \iota(\xi^M)\eta=0\ (\forall \xi \in \g) \bigr\}.
\]
is a subcomplex of $\Omega^*(M)$. Here $\xi^M$ is the infinitesimal action of $\xi$, i.e.
\[
 \xi^M(m) := 
 \left.\dfrac{d}{d\!\lambda}\right|_{\lambda=0} m \cdot \exp(\lambda\xi)
 \quad
 (m \in M).
\]
{\em If the $G$-action on $M$ is proper and locally free}, the cohomology ring of the complex of $G$-basic forms is isomorphic to the cohomology ring $H^*([M/G],\R)$. We call a $G$-basic form on $M$ a (differential) form on the quotient stack $[M/G]$.

{\em If $G$ is compact and connected}, then we can use the Cartan model for $H_G^*(M,\R)$ (i.e.\! $H^*([M/G],\R)$). We denote by $\Omega_G^k(M)$ the space of equivariant differential forms of degree $k$:
\[
 \Omega_G^k(M) := \bigoplus_{p+2q=k}\bigl(\Omega^p(M)\otimes S^q(\g^\vee)\bigr)^G.
\]
Here $S^q(\g^\vee)$ is the space of homogeneous polynomials on $\g$ of degree $q$ and inherits the $G$-module structure from the coadjoint representation $\g^\vee$. We regard an equivariant differential form $\eta$ as an equivariant polynomial function on $\g$ with values in differential forms. The graded algebra $\Omega_G^*(M)$ is equipped with the differential defined by the formula
\[
 (d_G \eta)(\xi):= d\eta(\xi) - \iota(\xi^M)\eta(\xi).
\]
The cohomology ring of the complex $(\Omega_G^*(M),d_G)$ is isomorphic to $H_G^*(M,\R)$. It is also known that the inclusion $\Omegabas^*(M) \to \Omega_G^*(M)$ is a quasi-isomorphism if the $G$-action is locally free.

The details of this discussion can be found in Behrend \cite{behrend04:_cohom} and Guillemin--Sternberg \cite{guillemin99:_super_rham}.

\subsection{A symplectic quotient stack}
\label{subsec:sympl-quot-stack}

Consider the standard linear $\T^d$-action on $\C^d$:
\[
 \C^d \acted \T^d:\
 (z_1,\dots,z_d) \cdot [\theta_1,\dots,\theta_d] :=
 (e^{2\pi\i\theta_1}z_1,\dots,e^{2\pi\i\theta_d}z_d).
\]
The function 
\[
 \mu_0: \C^d \to (\R^d)^\vee;\quad
 z \mapsto \pi \sum_{\nu=1}^d |z_\nu|^2 \vb e^\nu, 
\]
is a moment map of the $\T^d$-manifold $\C^d$. Namely the map $\mu_0$ is $\T^d$-equivariant and satisfies the identity $d\pair{\mu_0,\xi} = - \iota(\xi^M)\omegastd$ for all $\xi \in \R^d$. Here $\omegastd$ is the standard symplectic structure on $\C^d (= \R^{2d})$.

\begin{prop}
 [{\cite[Theorem 14]{sakai13:_delig_mumfor}}]
 \label{prop: a triple giving a symplectic DM stack}
 Let $(G,\rho,\tau)$ consist of 
 \begin{itemize}
  \item \squashup 
	a compact Lie group $G$ whose adjoint representation is trivial,
  \item \squashup 
	a Lie group homomorphism $\rho:G \to \T^d$, and
  \item \squashup 
	a regular value $\tau \in \g^\vee$ of the map $\mu := \rho^\vee \circ \mu_0$.
 \end{itemize}
 Here $\rho^\vee:(\R^d)^\vee \to \g^\vee$ is the dual of the induced Lie algebra homomorphism $\rho:\g \to \R^d$. If the level set $\mu^{-1}(\tau)$ is nonempty and the map $\mu:\C^d \to \g^\vee$ is proper, then the quotient stack $[\lm/G]$ is a compact symplectic DM stack. The symplectic form $\omega_\tau \in \Omegabas^2(\lm)$ is given by the restriction of $\omegastd$ to $\lm$.
\end{prop}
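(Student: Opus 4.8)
The plan is to verify separately the three assertions: that $\lm$ is a nonempty compact manifold, that the $G$-action on it is proper and locally free so that $[\lm/G]$ is a compact DM stack in the sense of \S\ref{subsec:quatient stack}, and that the restriction of $\omegastd$ is a basic, closed, transversally nondegenerate $2$-form, i.e.\ a symplectic form on the stack.

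First I would record that $\mu = \rho^\vee\circ\mu_0$ is a moment map for the $G$-action on $\C^d$ induced by $\rho$. Writing $\rho$ also for the Lie algebra map $\g\to\R^d$, one has $\pair{\mu,\xi}=\pair{\mu_0,\rho(\xi)}$ for $\xi\in\g$, hence $d\pair{\mu,\xi} = -\iota(\rho(\xi)^{\C^d})\omegastd = -\iota(\xi^{\C^d})\omegastd$; and $\mu$ is $G$-invariant because $\T^d$ is abelian, which is the correct equivariance since the adjoint (hence coadjoint) representation of $G$ is trivial. As $\tau$ is a regular value, $\lm$ is a closed submanifold of $\C^d$; since $\mu$ is proper, $\lm = \mu^{-1}(\{\tau\})$ is compact; and it is nonempty by hypothesis and $G$-invariant by the $G$-invariance of $\mu$.

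Next I would prove local freeness. At $p\in\lm$ the moment-map identity identifies $\ker d\mu_p$ with the $\omegastd$-orthogonal complement of the orbit tangent space $\g\cdot p$, so taking orthogonal complements again, $\g\cdot p$ is the $\omegastd$-orthogonal complement of $\ker d\mu_p$. Since $\tau$ is a regular value, $d\mu_p$ is surjective, so $\g\cdot p$ has dimension $\dim\g$, i.e.\ the infinitesimal action $\g\to T_p\C^d$ is injective. Hence the stabiliser of $p$ is a discrete subgroup of the compact group $G$, so it is finite and the action is locally free; it is automatically proper because $G$ is compact. By the criterion recalled in \S\ref{subsec:quatient stack} the quotient stack $[\lm/G]$ is DM, and it is compact because $\lm$ and $G$ are (the presenting action groupoid $\lm\times G\rightrightarrows\lm$ has compact object and arrow spaces).

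Finally, for the symplectic structure I would check $\omega_\tau := \omegastd|_{\lm}\in\Omegabas^2(\lm)$: it is $G$-invariant since $\omegastd$ is $\T^d$-invariant, and for $\xi\in\g$ one has $\iota(\xi^{\lm})\omega_\tau = -d\pair{\mu,\xi}|_{\lm} = 0$ because $\mu$ is constant on $\lm$; it is closed since $\omegastd$ is. As the $G$-action is locally free, $\omega_\tau$ then represents a closed $2$-form on the stack in the sense of \S\ref{subsec:coh of a quot stack}. For nondegeneracy one computes the kernel of $\omega_\tau$ at $p$: it is $\ker d\mu_p$ intersected with its own $\omegastd$-orthogonal complement, which by the previous step equals $\ker d\mu_p\cap(\g\cdot p) = \g\cdot p$, precisely the tangent space to the $G$-orbit through $p$. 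Hence $\omega_\tau$ induces a nondegenerate form on the transverse quotient, i.e.\ a symplectic form on $[\lm/G]$. I expect the only genuinely delicate step to be the local-freeness argument, which translates via the moment-map identity the regularity of $\tau$ into injectivity of the infinitesimal $\g$-action; elsewhere the main point is simply to keep track of the role of each hypothesis, namely properness of $\mu$ for compactness of $\lm$ and triviality of the adjoint representation so that ``$G$-invariant'' is indeed the equivariance a moment map must satisfy.
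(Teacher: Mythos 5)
Your argument is correct: it is the standard Marsden--Weinstein reduction argument (regularity of $\tau$ plus the moment-map identity give local freeness and identify $\ker\omega_\tau|_p$ with the orbit directions, properness of $\mu$ gives compactness, and triviality of the adjoint representation makes $G$-invariance the right equivariance). Note that this paper does not prove the proposition at all but quotes it as Theorem 14 of \cite{sakai13:_delig_mumfor}, and your proof is essentially the same argument given there, so there is nothing further to reconcile.
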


We denote by $\Xt$ the quotient stack and call $\Xt$ the stack associated to the triple $(G,\rho,\tau)$.

\subsection{A stacky polytope and the associated stack}
\label{subsec:stacky polytope}

We recall briefly the theory of stacky polytopes \cite[§3]{sakai13:_delig_mumfor}.

\begin{dfn}
 \label{dfn:stacky polytope}
 A triple $(N,\Delta,\beta)$ of 
\begin{itemize}
 \item \squashup 
       a finitely generated $\Z$-module or rank $r$, 
 \item \squashup
       a simple polytope $\Delta$ with $d$-facets $F_1,\dots,F_d$ in $(N \otimes_{\Z}\R)^\vee$ and 
 \item \squashup
       a homomorphism of $\Z$-modules $\beta:\Z^d \to N$ with finite cokernel
\end{itemize}
 is called a stacky polytope if the vectors $\beta(\vb e_1) \otimes 1,\dots,\beta(\vb e_d) \otimes 1$ in $N \otimes_\Z \R$ are perpendicular to the facets $F_1,\dots,F_d$ in inward-pointing way, respectively. 
\end{dfn}

We can assign to each stacky polytope $(N,\Delta,\beta)$ a triple $(G,\rho,\tau)$ satisfying the assumptions in Proposition \ref{prop: a triple giving a symplectic DM stack} as follows.

Take projective resolutions $E$ and $F$ of the $\Z$-modules $\Z^d$ and $N$ respectively. The homomorphism of $\Z$-modules $\beta:\Z^d \to N$ induces a map of chain complexes $\beta:E \to F$ and gives rise to a short exact sequence of chain complexes $0 \to F \to \mathrm{Cone}(\beta) \to E[1] \to 0$. Here $\mathrm{Cone}(\beta)$ is the mapping cone. The dual sequence $0 \to E[1]^\vee \to \mathrm{Cone}(\beta)^\vee \to F^\vee \to 0$ is a short exact sequence of cochain complexes and the long exact sequence of cohomology groups contains the exact sequence 
\begin{equation}
 \label{eqn:exact sequence for DG(beta)}
 \xymatrix@C=30pt{
  0 \ar[r] &
  N^\vee \ar[r]^(.4){\beta^\vee} &
  (\Z^d)^\vee \ar[r]^{\beta^\DG} &
  \DG(\beta) \ar[r] &
  \mathrm{Ext}_{\Z}^1(N,\Z) \ar[r] &
  0.
  }
\end{equation}
Here $\DG(\beta) := H^1\bigl(\mathrm{Cone}(\beta)^\vee\bigr)$. Applying the contravariant functor $\Hom_\Z(-,\T)$ to $\beta^\DG$, we obtain 
\[
 \rho: G \to \T^d,
\]
where $G:=\Hom_\Z(\DG(\beta),\T)$. Note that we can naturally identify $\Hom_\Z((\Z^d)^\vee,\T)$ with $\T^d$. Since $\DG(\beta)$ is a finitely generated $\Z$-module, $G$ is a compact abelian Lie group. Set $w^\nu := \rho^\vee(\vb e^\nu) \in \g^\vee$ ($\nu = 1,\dots,d$).

The second condition in Definition \ref{dfn:stacky polytope} implies that the polytope $\Delta$ is rational and can be described as
\begin{equation}
 \label{eqn:polytope-as-intersection-of-hyperplanes}
  \Delta = 
  \bigl\{ \eta \in (N \otimes_\Z \R)^\dual
  \big|
  \pair{\eta,\beta(\vb e_\alpha)\otimes 1} \geq -c_\alpha \bigr\}.
\end{equation}
for some $c_\alpha \in \R\ (\alpha = 1,\dots,d)$.

\begin{prop}%
 [{\cite[Theorem 18]{sakai13:_delig_mumfor}}]
 \label{prop:stacky polytope defines a symp DM stack}
 Let $(N,\Delta,\beta)$ be a stacky polytope. Define $(G,\rho,\tau)$ by the triple of 
 \begin{itemize}
  \item \squashup
	the Lie group $G = \Hom_\Z(\DG(\beta),\T)$,
  \item \squashup
	the homomorphism $\rho:G \to \T^d$ induced by $\beta^\DG:(\Z^d)^\vee \to \DG(\beta)$, and 
  \item \squashup
	the covector $\tau:= \sum_{\nu=1}^d c_\nu w^\nu$.
 \end{itemize}
 Here $c_1,\dots,c_d$ are the constants appearing in (\ref{eqn:polytope-as-intersection-of-hyperplanes}). Then the triple $(G,\rho,\tau)$ satisfies the assumptions of Proposition \ref{prop: a triple giving a symplectic DM stack}. 
\end{prop}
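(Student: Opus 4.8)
The plan is to verify, one at a time, the five conditions hidden in the hypotheses of Proposition~\ref{prop: a triple giving a symplectic DM stack}: that $G$ is a compact Lie group with trivial adjoint representation, that $\rho$ is a Lie group homomorphism, that $\mu=\rho^\vee\circ\mu_0$ is proper, that $\mu^{-1}(\tau)$ is nonempty, and that $\tau$ is a regular value of $\mu$. The main tool is the exact sequence (\ref{eqn:exact sequence for DG(beta)}). Since $\mathrm{Ext}^1_\Z(N,\Z)$ is finite, tensoring that sequence with $\R$ gives a short exact sequence of real vector spaces
\[
 0 \longrightarrow (N\otimes_\Z\R)^\vee \xrightarrow{\ \beta^\vee\ } (\R^d)^\vee \xrightarrow{\ \rho^\vee\ } \g^\vee \longrightarrow 0 ,
\]
under the identifications $\g^\vee\cong\DG(\beta)\otimes_\Z\R$ and $\rho^\vee\cong\beta^\DG\otimes_\Z\R$; dualising it yields $0\to\g\xrightarrow{d\rho}\R^d\xrightarrow{\beta\otimes\R}N\otimes_\Z\R\to 0$, so $d\rho$ is injective with image $\ker(\beta\otimes\R)$, a fact I will use repeatedly. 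The first two conditions are immediate: $\DG(\beta)$ is finitely generated, so $G=\Hom_\Z(\DG(\beta),\T)$ is a compact abelian Lie group and hence has trivial adjoint representation, while $\rho$ is a Lie group homomorphism by construction. From $\mu_0(z)=\pi\sum_\nu|z_\nu|^2\vb e^\nu$ I record the formula $\mu(z)=\rho^\vee(\mu_0(z))=\pi\sum_\nu|z_\nu|^2\,w^\nu$, which is the starting point for everything that follows.

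For properness, I observe that $\mu_0$ is proper as a map onto the closed polyhedral cone $C=\{\,\sum_\nu t_\nu\vb e^\nu\mid t_\nu\ge 0\,\}\subset(\R^d)^\vee$; hence $\mu=\rho^\vee\circ\mu_0$ is proper provided $\ker\rho^\vee\cap C=\{0\}$. By exactness $\ker\rho^\vee=\image\beta^\vee$, and $\beta^\vee(\eta)=\sum_\nu\pair{\eta,\beta(\vb e_\nu)\otimes 1}\vb e^\nu$ lies in $C$ exactly when $\pair{\eta,\beta(\vb e_\nu)\otimes 1}\ge 0$ for all $\nu$, that is, exactly when $\eta$ lies in the recession cone of $\Delta$; since $\Delta$ is bounded this cone is $\{0\}$, so $\beta^\vee(\eta)=0$ and $\mu$ is proper. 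For non-emptiness of the level set, choose any $\eta\in\Delta$ and set $|z_\nu|^2:=\pi^{-1}\bigl(c_\nu+\pair{\eta,\beta(\vb e_\nu)\otimes 1}\bigr)$, which is $\ge 0$ because $\eta\in\Delta$; then $\mu(z)-\tau=\sum_\nu\bigl(\pi|z_\nu|^2-c_\nu\bigr)w^\nu=\rho^\vee\bigl(\beta^\vee(\eta)\bigr)=0$, since $\rho^\vee\circ\beta^\vee=0$ in the exact sequence, so $z\in\mu^{-1}(\tau)$.

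The heart of the argument is regularity of $\tau$, which amounts to $d\mu_z$ being surjective for every $z\in\mu^{-1}(\tau)$. By the moment-map identity $d\pair{\mu,\xi}_z=-\iota(\xi^{\C^d})\omegastd$ (obtained from the one for $\mu_0$ by pulling back along $d\rho$) and the non-degeneracy of $\omegastd$, the annihilator of $\image d\mu_z$ inside $\g$ is $\{\xi\in\g\mid\xi^{\C^d}(z)=0\}=\Lie(\mathrm{Stab}_G(z))$, so $\coker(d\mu_z)$ is dual to $\Lie(\mathrm{Stab}_G(z))$; thus $\tau$ is regular precisely when $G$ acts locally freely along $\mu^{-1}(\tau)$. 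Now $\mathrm{Stab}_G(z)=\rho^{-1}\bigl(\mathrm{Stab}_{\T^d}(z)\bigr)$ and $\mathrm{Stab}_{\T^d}(z)$ has Lie algebra $\mathrm{span}_\R\{\,\vb e_\nu\mid z_\nu=0\,\}$, so $\Lie(\mathrm{Stab}_G(z))=(d\rho)^{-1}\bigl(\mathrm{span}_\R\{\vb e_\nu\mid z_\nu=0\}\bigr)$; since $d\rho$ is injective with image $\ker(\beta\otimes\R)$, this vanishes if and only if $\{\,\beta(\vb e_\nu)\otimes 1\mid z_\nu=0\,\}$ is linearly independent in $N\otimes_\Z\R$. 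Finally, for $z\in\mu^{-1}(\tau)$ the computation above gives $z_\nu=0\iff\pair{\eta,\beta(\vb e_\nu)\otimes 1}=-c_\nu\iff\eta\in F_\nu$, where $\eta\in\Delta$ is the point attached to $z$; hence $\{\nu\mid z_\nu=0\}$ is exactly the set of facets of $\Delta$ through $\eta$. Since $\Delta$ is simple, the inward normals $\beta(\vb e_\nu)\otimes 1$ of the facets meeting at any vertex are linearly independent, and the facet set of an arbitrary point of $\Delta$ is contained in the facet set of one of its vertices, so the required independence holds and $\tau$ is a regular value.

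I expect the regularity step to be the one demanding the most care, since it combines the identification of $\coker(d\mu_z)$ with the dual stabiliser Lie algebra, the translation of stabilisers through the dualised sequence $0\to\g\to\R^d\to N\otimes_\Z\R\to 0$, and the characterisation of simple polytopes via linear independence of facet normals. The remaining pieces — unwinding (\ref{eqn:exact sequence for DG(beta)}), the properness of $\mu_0$ onto $C$, and the triviality of the recession cone of a bounded polytope — are routine.
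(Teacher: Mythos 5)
Your proof is correct: the paper does not reprove this proposition but quotes it from \cite[Theorem 18]{sakai13:_delig_mumfor}, and your argument is exactly the standard Delzant-type verification that the cited source carries out --- realify the exact sequence (\ref{eqn:exact sequence for DG(beta)}), get properness of $\mu$ from the triviality of the recession cone of $\Delta$, non-emptiness of $\mu^{-1}(\tau)$ from a point of $\Delta$, and regularity of $\tau$ from local freeness via simplicity of the polytope. The two steps you state tersely --- assigning to each $z\in\mu^{-1}(\tau)$ its point $\eta\in\Delta$ (which uses $\ker\rho^\vee=\image\beta^\vee$ together with injectivity of the realified $\beta^\vee$), and the full-dimensionality of $\Delta$ implicitly needed for the linear independence of the facet normals at a vertex --- both follow from what you have already set up, so no genuine gap remains.
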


For a stacky polytope $(N,\Delta,\beta)$ we denote by $\XD$ the DM stack associated to the triple $(G,\rho,\tau)$ given by the above Proposition. We call $\XD$ the stack associated to the stacky polytope.

\section{Prequantisations}
\label{sec:prequantisation}%---------------------------------------

We introduce a prequantisation of a quotient stack. Its notion is mostly a straightforward extension of the notion of a prequantisation of a manifold. %notation in Guillemin--Ginzburg--Karshon \cite{guillemin02:_momen_hamil} in terms of differentiable stacks.

\subsection{Definitions of $\T$-bundles and connections}

Let $G$ be a Lie group and $M$ a $G$-manifold. We assume that the $G$-action on $M$ is proper and locally free so that the quotient stack $\X := [M/G]$ is DM. 

\begin{dfn}
 [{\cite[§4]{behrend11:_differ}}]
  \label{dfn:principal T-bundle over a stack}
 A principal $\T$-bundle over $\X$ is a representable morphism of stacks $\pi:\P \to \X$ together with a $2$-commutative diagram
 \begin{equation}
  \label{eqn:principal T-bundle over a stack}
   \vcenter{
   \xymatrix{
   \P \times \T \ar[d]_{\pr_1} \ar[r]^\Psi & \P \ar[d]^\pi \\
  \P \ar[r]_\pi & \X \\
  }
 }
 \end{equation} 
 such that for every $x: U \to \X$, the pullback $x^*\!\P \to U$ along $x$ is an ordinary principal $\T$-bundle over $U$.
\end{dfn}

By definition the pullback of $\P$ along the atlas $\pi^M:M \to \X$ is a principal $\T$-bundle $P \to M$. Namely we have a 2-cartesian diagram
\begin{equation}
 \label{pullback-of-principal-bundle-on-stack}
 \xymatrix{
 P \ar[d]_{} \ar[r]^{\pi^P} & \P \ar[d]^\pi \\
 M \ar[r]_{\pi^M}& \X.
 } 
\end{equation}
The manifold $P$ inherits a $G$-action from $M$ and we can see that $P \to M$ is a $G$-equivariant principal $\T$-bundle. %(The $G$-action map $P \times G \to P$ is defined by the universal property of the above 2-cartesian square.)
 Conversely for a $G$-equivariant principal $\T$-bundle $P \to M$, the induced map $[P/G] \to \X$ gives us a principal $\T$-bundle on $\X$. The $\T$-action map $\psi:P \times \T \to P$ descends a $\T$-action map $\Psi:[P/G] \times \T \to [P/G]$ on $\P$. This correspondence gives rise to an equivalence between the category of principal $\T$-bundles on $\X$ and the category of $G$-equivariant principal $\T$-bundles on $M$ \cite[Proposition 4.3]{behrend11:_differ}. 

In the diagram (\ref{pullback-of-principal-bundle-on-stack}), the morphism of stack $\pi^P:P \to \P$ is an atlas of the stack $\P$ and induces an equivalence of stacks $[P/G] \to \P$ \cite[\S 3]{sakai12:_rieman}. Since the projection map $P \to M$ is a proper $G$-equivariant map to the locally free $G$-space, the $G$-action on $P$ is also proper and locally free. Therefore $\P$ is a DM stack and a differential form (§\ref{subsec:quatient stack}) and a vector field on $\P$ is meaningful. 

\begin{dfn}
 A connection of a principal $\T$-bundle $\pi:\P \to \X$ is a 1-form $\Theta$ on $\P$ satisfying the following conditions.
 \begin{enumerate}
  \item For any $\zeta \in \R (=\Lie \T)$, the identity $\Theta(\zeta^\P) = \zeta$ holds. 
  \item $\Theta$ is $\T$-invariant, i.e. $\Psi^*\Theta = \pr_1^*\Theta$ holds on $\Omega^1(\P \times \T)$. 
 \end{enumerate}
 Here morphisms of stacks $\Psi$ and $\pr_1$ in the second condition come from the diagram (\ref{eqn:principal T-bundle over a stack}), and the infinitesimal action $X^\P$ is a vector field on $\P$ which is defined by using the morphism of stacks $\Psi:\P \times \T \to \P$ \cite[§3.6]{lerman12:_hamil_delig_mumfor}.
\end{dfn}

We can explicitly describe the above definition in terms of atlases as follows. As we see above, the stack $\P$ is (equivalent to) the quotient stack $[P/G]$, where $P$ is the $G$-equivariant principal $\T$-bundle over $M$. Since the source map and the target map of the groupoid $P \times G \rightrightarrows P$ are both $\T$-equivariant, (the equivalent class of) the pair $\bigl((\zeta^P,0),\zeta^P\bigr)$ of vector fields on $P \times G$ and $P$ gives a vector field on $\P$. The connection $\Theta$ is a $G$-basic 1-form on $P$ and the pairing $\Theta(\zeta^\P)$ is given by the usual pairing on $P$: $\Theta(\zeta^\P) = \Theta(\zeta^P)$ and $\T$-invariance of $\Theta$ agrees with the usual $\T$-invariance. Therefore a connection $\Theta$ of a principal $\T$-bundle $\P \to \X$ is nothing but a $G$-basic $\T$-connection of the $G$-equivariant principal $\T$-bundle $P\to M$. 

\begin{remark}
 Our definition of a connection of the principal $\T$-bundle $\P\to\X$ agrees with definition of a connection of the principal $\T$-bundle $P \to M$ over the Lie groupoid $G \times M \rightrightarrows M$ \cite{laurent-gengoux07:_chern_weil}. A principal $\T$-bundle over a Lie groupoid might not admit a connection, but our principal bundle does. (See the proof of Theorem \ref{thm:existence of a prequantisation}.)
\end{remark}

We can easily see through atlases that the pullback $\pi^*\!: \Omega(\X) \to \Omega(\P)$ induces an isomorphism from $\Omega(\X)$ to the complex of $\T$-basic forms on $\P$. If $\Theta$ is a connection of the principal $\T$-bundle $\P \to \X$, then $d\Theta$ is a $\T$-basic 2-form on $\P$. Therefore there uniquely exists a 2-form $F_\Theta$ on $\X$ satisfying $\pi^*\!F_\Theta = d\Theta$. We call the closed 2-form $F_\Theta$ the curvature of the connection $\Theta$. The de Rham cohomology class $[F_\Theta]$ is the first Chern class of $\P$ \cite{laurent-gengoux07:_chern_weil}.

\subsection{A prequantisation of a quotient stack}

Let $G$ be a Lie group and $M$ a manifold equipped with a proper and locally free $G$-action. We fix a closed $2$-form $\omega$ on $\X:=[M/G]$ i.e.\! a closed $G$-basic $2$-form on $M$. If $\ker \omega_m$ coincides with $\{\xi^M(m) | \xi \in \g \}$ for every $m \in M$, then $\omega$ is a symplectic form on $\X$ in the sense of Lerman--Malkin \cite{lerman12:_hamil_delig_mumfor}. However we assume here only the closedness of $\omega$.

\begin{dfn}
 A prequantisation of $\X$ is a pair $(\P,\Theta)$ of a principal $\T$-bundle $\P \to \X$ and a connection $\Theta$ whose curvature $F_\Theta$ agrees with $\omega$.
\end{dfn}

\begin{remark}
 A prequantisation $(\P,\Theta)$ of $[M/G]$ is an equivariant prequantisation $(P,\Theta)$ of $(M,\omega,0)$, where $0:M \to \g^\vee$ is the ($G$-equivariant) zero map. The discussion about equivariant prequantisation depends largely on the Cartan model \cite{guillemin02:_momen_hamil}. Since we deal with a non-compact or non-connected Lie group $G$, we can not use the theory of equivariant prequantisations directly.
\end{remark}

%We assume the following two conditions to concentrate on the stack $\XD$ associated to a stacky polytope.
\begin{assumption}
 \label{assumption:simple-connectedness and classification of line bundles}
 For the quotient stack $\X := [M/G]$, we assume the following conditions.
 \begin{enumerate} 
  \item \squashup
	The $G$-manifold $M$ is simply-connected.
  \item \squashup
	Principal $\T$-bundles on $\X$ are classified by $H^2(\X,\Z)$ through the first Chern class.
 \end{enumerate}
\end{assumption}

If $G$ is compact, then by restating the classification of smooth $G$-equivariant line bundles on $M$ \cite{guillemin02:_momen_hamil,riera01:_lifts}, we can see that Assumption (ii) holds for the stack $\Xt$ associated to the triple $(G,\rho,\tau)$ satisfying the assumptions in Proposition \ref{prop: a triple giving a symplectic DM stack}. 

\begin{remark}
 We do not assume $G$ to be compact, since we deal with a non-compact Lie group.
\end{remark}

\begin{thm}
 \label{thm:uniqueness of prequantisation}
 If $(\P_1,\Theta_1)$ and $(\P_2,\Theta_2)$ are both prequantisations of $\X$, then there is a map $\gamma:\X \to \T$ such that $\Theta_2 = \Theta_1 + \gamma^*\!d\!t$. Here $d\!t$ is the Maurer--Cartan form on $\T$.
\end{thm}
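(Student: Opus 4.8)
The plan is to compare the two prequantisations by forming the difference of their connections and showing it is an exact form pulled back from a circle-valued map. First I would observe that since both curvatures equal $\omega$, the difference $\Theta_2 - \Theta_1$ is a closed $1$-form on $\P_1$ (after identifying $\P_1$ and $\P_2$); more precisely, both $\P_1$ and $\P_2$ have first Chern class $[\omega]$, so by Assumption \ref{assumption:simple-connectedness and classification of line bundles}(ii) they are isomorphic as principal $\T$-bundles over $\X$. Fixing such an isomorphism $\varphi:\P_1 \to \P_2$, the form $\eta := \varphi^*\Theta_2 - \Theta_1$ is a $\T$-basic $1$-form on $\P_1$ (both connections restrict to the Maurer--Cartan form on fibres, and both are $\T$-invariant, so the difference kills the vertical vector field and is $\T$-invariant), hence descends to a closed $1$-form $\bar\eta$ on $\X$ with $[\bar\eta] \in H^1(\X,\R)$ arising as the image of an integral class (because changing $\varphi$ by a gauge transformation shifts $\bar\eta$ by an integral periods form).

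The key steps, in order, are: (1) realise $\eta$ as pulled back from a closed $1$-form $\bar\eta$ on $\X$ using the fact from the excerpt that $\pi^*$ identifies $\Omega(\X)$ with the $\T$-basic forms on $\P$; (2) show $[\bar\eta]$ lies in the image of $H^1(\X,\Z) \to H^1(\X,\R)$ by analysing the ambiguity in the bundle isomorphism $\varphi$ — two isomorphisms differ by a gauge transformation $\X \to \T$, whose effect on the connection difference is to add $\gamma_0^*dt$ for some $\gamma_0:\X\to\T$, so the class of $\bar\eta$ modulo integral classes is well defined and must in fact be trivial once we account for the torsion-free part using simple-connectedness of $M$; (3) integrate: a closed $1$-form on $\X$ with integral periods is $\gamma^*dt$ for some $\gamma:\X\to\T$, by the standard argument realising such a form as the logarithmic derivative of a circle-valued function, which on the groupoid level means producing a $G$-invariant map $M \to \T$ whose differential is the given $\T$-basic form — here is where the simple-connectedness of $M$ (Assumption (i)) does the work, guaranteeing the primitive on $M$ exists and the $G$-invariance can be arranged.

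The main obstacle I expect is step (2)–(3): controlling the periods of $\bar\eta$ and producing the circle-valued map $\gamma$ intrinsically on the stack rather than just on an atlas. On $M$ itself, simple-connectedness gives a global real primitive $f$ of the pulled-back form, but one must check that $f$ descends to a $G$-invariant (hence stack-level) function valued in $\T$ rather than $\R$, which forces an honest period/holonomy computation: the holonomy of $\Theta_2\varphi^{-1}\ominus\Theta_1$ around loops in $\X$ must be shown to be the obstruction that $\gamma$ exactly captures, and compatibility with the $G$-action on the primitive requires that $G$ act on the fibres of $\P_i$ compatibly — this is exactly where one uses that both $\P_i$ come with $\T$-bundle structures over the \emph{same} stack. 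I would handle this by working on the atlas $M$, writing everything in terms of the $G$-equivariant $\T$-bundles $P_i \to M$ and their $G$-basic $\T$-connections, reducing the claim to the manifold statement that two equivariant prequantisations of $(M,\omega,0)$ differ by an equivariant map $M\to\T$, and then translating back via the equivalence $[P/G]\simeq\P$.
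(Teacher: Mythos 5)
Your first half coincides with the paper's proof: identify $\P_1$ and $\P_2$ via Assumption \ref{assumption:simple-connectedness and classification of line bundles} (ii), observe that the difference of the two connections is a closed $\T$-basic $1$-form, and descend it to a closed $1$-form $\bar\eta$ on $\X$. The genuine gap is in your steps (2)--(3), which you yourself flag as the main obstacle and then do not close. Simple-connectedness of $M$, applied only on the atlas, gives a primitive $f:M\to\R$ with $f(m\cdot g)-f(m)=c(g)$ for some homomorphism $c:G\to\R$; the entire content of the theorem is to kill $c$ (even modulo $\Z$), i.e.\ to produce a primitive that lives on the stack, and neither the phrase ``the $G$-invariance can be arranged'' nor the assertion that the class of $\bar\eta$ ``must in fact be trivial'' is supported by an argument. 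Your fallback --- reducing to ``the manifold statement that two equivariant prequantisations of $(M,\omega,0)$ differ by an equivariant map $M\to\T$'' --- is circular: that is precisely the theorem being proved, read on the atlas, and the paper explicitly remarks that the standard equivariant prequantisation theory cannot be invoked directly because $G$ may be non-compact or non-connected.

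The paper closes this step by a short cohomological computation that renders your period/holonomy analysis unnecessary: the homotopy exact sequence of the Borel fibration $M\times_G EG\to BG$ together with $\pi_1(M)=0$ gives $H^1(\X,\R)=0$; since the complex of $G$-basic forms computes $H^*(\X,\R)$ for a proper locally free action, the descended closed $1$-form is exact at the stack level, so $\pi^*\!d\!f=\Theta_1-\Theta_2$ for an honest function $f:\X\to\R$ (a $G$-invariant function on $M$), and $\gamma$ is simply $f$ followed by the projection $\R\to\T$. In particular no integrality of periods is needed, and the bookkeeping about the ambiguity of the bundle identification in your step (2) can be dropped, since any identification works. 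If you prefer to keep your integral-periods route, the statement you must actually prove is exactly this vanishing of $H^1(\X,\R)$ (equivalently, the integrality of the periods of $\bar\eta$); simple-connectedness of $M$ used only pointwise on the atlas does not deliver it.
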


\begin{proof}
Since $[F_{\Theta_1}] = [\omega] = [F_{\Theta_2}]$, we may identify $\P_1$ with $\P_2$ by Assumption \ref{assumption:simple-connectedness and classification of line bundles} (ii) and denote by $\P$ each of them. 

The simple-connectedness of $M$ and the homotopy exact sequence of the fibration $M \times_GEG \to BG$ imply that $H^1(\X,\R) = 0$.
Since $\Theta_1-\Theta_2$ is a closed $\T$-basic 1-form, there is a function $f:\X \to \R$ satisfying $\pi^*\!d\!f = \Theta_1-\Theta_2$. The composition $\gamma$ of $f:\X \to \R$ and the quotient map $\R \to \T$ is what we want.
\end{proof}

\begin{remark}
 The reason why we take a function $\gamma:\X \to \T$ instead of $f:\X \to \R$ is that $\gamma$ gives a $G$- and $\T$-equivariant diffeomorphism $P \to P$ over $M$:
 \[
 P \to P;\ p \mapsto p \gamma'(\pi(p)).
 \]
 Here $\gamma': M \to \T$ is the composition of $\gamma$ and the atlas $M \to \X$. Since above map gives an $\T$-equivalence of stacks $\P \to \P$ over $\X$, Theorem \ref{thm:uniqueness of prequantisation} could say that a prequantisation of $\Xt$ is unique up to automorphisms.
\end{remark}

\begin{thm}
 \label{thm:existence of a prequantisation}
 Assume that $G$ is compact. Then $\X$ admits a prequantisation if and only if the cohomology class $[\omega]$ is integral.
\end{thm}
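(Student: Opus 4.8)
The plan is to reduce the statement to the corresponding fact for $G$-equivariant principal $\T$-bundles on $M$ and then to produce an explicit equivariant connection using averaging, which is where compactness of $G$ is used. The ``only if'' direction is the easy half: if $(\P,\Theta)$ is a prequantisation, then $F_\Theta = \omega$ and the de Rham class $[\omega] = [F_\Theta]$ is the image of the first Chern class of $\P$ in $H^2(\X,\R)$, hence lies in the image of $H^2(\X,\Z)$, i.e. $[\omega]$ is integral. This uses only the Chern--Weil description of $[F_\Theta]$ recalled before the theorem, together with the fact that $\pi^*$ identifies $\Omega(\X)$ with $\T$-basic forms on $\P$.

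For the ``if'' direction, suppose $[\omega]$ is integral. By Assumption \ref{assumption:simple-connectedness and classification of line bundles}(ii) there is a principal $\T$-bundle $\P \to \X$ with $c_1(\P)$ mapping to $[\omega]$; equivalently, by the correspondence recalled after Definition \ref{dfn:principal T-bundle over a stack}, there is a $G$-equivariant principal $\T$-bundle $P \to M$ whose (equivariant/ordinary) first Chern class is $[\omega]$. Now I would pick any $G$-basic $\T$-connection $\Theta_0$ on $P \to M$: since $M$ is simply connected and $[\omega]$ is the image of the Chern class of $P$, the ordinary curvature $F_{\Theta_0}$ and $\omega$ are closed $2$-forms on $\X$ with the same de Rham class, so $F_{\Theta_0} - \omega = d\beta$ for some $1$-form $\beta$ on $\X$ (a $G$-basic $1$-form on $M$). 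One must first check that such a $G$-basic connection $\Theta_0$ exists at all — this is where the remark about connections on bundles over Lie groupoids is relevant: start from an arbitrary $\T$-connection on $P$ and average it over the compact group $G$ using a Haar measure, noting that the averaging preserves the defining property $\Theta(\zeta^P) = \zeta$ and produces a $G$-invariant form; the horizontality (vanishing on $\xi^P$ for $\xi \in \g$) follows because the $G$- and $\T$-actions commute and $P \to M$ is $G$-equivariant, so after averaging $\iota(\xi^P)\Theta_0$ is a $G$-invariant function which one arranges to vanish. Then $\Theta := \Theta_0 - \pi^*\beta$ is again a $G$-basic $\T$-connection, and its curvature is $F_{\Theta_0} - d\beta = \omega$; hence $(\P,\Theta)$ is the desired prequantisation.

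The main obstacle is the construction of a $G$-basic connection $\Theta_0$ on the $G$-equivariant bundle $P \to M$, i.e. making the averaging argument actually yield a form that is both $\T$-principal and $G$-horizontal, not merely $G$-invariant. The subtlety is that horizontality with respect to the $\g$-directions is not automatically preserved by naive averaging and must be repaired; the fact that $P \to M$ is a $G$-equivariant bundle over a locally free (hence, on the quotient stack, the $\g$-directions are ``vertical'') base is what makes this possible, but it deserves a careful few lines. Once $\Theta_0$ exists, everything else is formal: the cohomological comparison $F_{\Theta_0} - \omega = d\beta$ uses only $H^1(\X,\R) = 0$ (already obtained in the proof of Theorem \ref{thm:uniqueness of prequantisation} from simple-connectedness of $M$ via the fibration $M \times_G EG \to BG$) together with the identification of $\Omega(\X)$ with $\T$-basic forms, and subtracting $\pi^*\beta$ adjusts the curvature to $\omega$ on the nose.
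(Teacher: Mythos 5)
Your overall route is the same as the paper's: the easy direction via $[\omega]=c_1(\P)$, and for the converse, pass from the bundle supplied by Assumption \ref{assumption:simple-connectedness and classification of line bundles} (ii) to the $G$-equivariant principal $\T$-bundle $P\to M$, produce a $G$-basic $\T$-connection on it, and then correct the curvature by adding $\pi^*\beta$ where $\beta$ is a $G$-basic $1$-form with $F+d\beta=\omega$. But there is a genuine gap at exactly the step you yourself flag as the main obstacle: you never actually construct the $G$-basic connection. Averaging over the compact group $G$ only yields a $G$-invariant $\T$-connection $\theta$; it does nothing for horizontality, and your proposed repair --- that $\iota(\xi^P)\theta$ is a $G$-invariant function ``which one arranges to vanish'' --- is not an argument: what must be subtracted from $\theta$ is a single $1$-form that kills $\xi^P$ for \emph{all} $\xi\in\g$ simultaneously while preserving the normalisation $\Theta(\zeta^P)=\zeta$ for $\zeta\in\Lie\T$, and no amount of further averaging produces it. The missing idea, which is the actual content of the paper's proof, is to use the local freeness of the $G$-action to choose a $G$-connection $A$, i.e.\ a $\g$-valued $G$-invariant $1$-form with $\iota(\xi^P)A=\xi$ (for instance pulled back from such a form on $M$, so that $A$ annihilates the $\T$-vertical directions), and to set $\Theta(v):=\theta\bigl(v-A(v)^P(p)\bigr)$. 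This $\Theta$ is $G$-invariant, satisfies $\iota(\xi^P)\Theta=\theta(\xi^P-\xi^P)=0$, and still obeys $\Theta(\zeta^P)=\zeta$, hence is a $G$-basic $\T$-connection; without some such device your construction does not get off the ground.

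A smaller inaccuracy: the existence of $\beta$ with $F_\Theta-\omega=d\beta$ in the basic complex needs only that the two closed $2$-forms represent the same class in $H^2(\X,\R)$, which follows from $[F_\Theta]=c_1(\P)=[\omega]$ since the basic complex computes $H^*(\X,\R)$ for a proper locally free action; the vanishing of $H^1(\X,\R)$, which you invoke here, is what the paper uses for the uniqueness statement (Theorem \ref{thm:uniqueness of prequantisation}), not for existence.
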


\begin{proof}
 If $\X$ admits a prequantisation $(\P,\Theta)$, then $[\omega]= c_1(\P) \in H^2(\X,\Z)$. 

 Conversely we assume that $[\omega] \in H^2(\X,\Z)$. By Assumption \ref{assumption:simple-connectedness and classification of line bundles} (ii), we have a $\T$-bundle $\P \to \X$ such that $c_1(\P) = [\omega]$. Let $P \to M$ be the $G$-equivariant principal $\T$-bundle associated to $\P$.

 Choose a $G$-invariant $\T$-connection $\theta \in \Omega^1(P)^G$ on $P$. (Such a connection can be obtained by averaging a $\T$-connection over $G$). Since the $G$-action on $P$ is locally free, we may have a $G$-connection $A$ on $P$ i.e.\! a $\g$-valued $G$-invariant 1-form $A$ on $P$ satisfying $\iota(\xi^M)A = \xi$ for all $\xi \in \g$. Define a 1-form $\Theta$ on $P$ by the formula 
\[
 \Theta(v) := \theta \bigl(v - A(v)^P(p)\bigr)
 \quad
 (p \in P,\ v \in T_pP)
\]
Then $\Theta$ is a $G$-basic $\T$-connection on $P$. Since $[F_\Theta] = c_1(\P) = [\omega]$, there is a $G$-basic $1$-form $\beta$ on $M$ satisfying $F^\Theta +d\!\beta = \omega$. Since $\pi^*\!\beta$ is $G$-basic and $\T$-basic, $\Theta+\pi^*\beta$ is also a $G$-basic $\T$-connection on $P$. Therefore the pair $(\P,\Theta+\pi^*\!\beta)$ is a prequantisation of $\X$.
\end{proof}

\begin{remark}
 From the stacky point of view, the assumption of Theorem \ref{thm:existence of a prequantisation} should be ``$\X$ admits a representable morphism $\X \to \BG$ for some compact Lie group $G$'', because the compactness of $G$ is not a property of $\X$ even though $\X$ is defined as the quotient stack $[M/G]$.
\end{remark}

\subsection{The integral condition}
\label{subsec:the integral condition}

In this subsection we consider a prequantisation of the stack associated to a triple $(G,\rho,\tau)$ with the following assumptions.
\begin{assumption}
 \label{assumption:simple-connectedness}
 For the triple $(G,\rho,\tau)$ satisfying the conditions in Proposition \ref{prop: a triple giving a symplectic DM stack}, we additionally assume that 
 \begin{itemize}
  \item \squashup
	The Lie group $G$ is a compact torus, and
  \item \squashup
	The level manifold $\lm$ is 2-connected.
 \end{itemize}
\end{assumption}

The above assumptions imply that the quotient stack $\Xt$ satisfies Assumption \ref{assumption:simple-connectedness and classification of line bundles}. The first condition allows us to use the Cartan model for equivariant cohomology. The second condition seems to be rather strong, but it holds for the triple $(G,\rho,\tau)$ defined by a stacky polytope.

%One of the important topological conclusion of Assumption \ref{assumption:simple-connectedness} is:
\begin{lem}
 \label{lem:module map is an isom to integral classes}
 The module map $H^2(BG,\Z) \to H^2_G(\lm,\Z)$ is an isomorphism.
\end{lem}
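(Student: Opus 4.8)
The plan is to identify $H^2_G(\lm,\Z)$ with $H^2(BG,\Z)$ by using the fibration $\lm \times_G EG \to BG$ associated to the Borel construction, whose fibre is the level manifold $\lm$. Since $G$ is a compact torus, $H^*(BG,\Z)$ is a polynomial ring on generators in degree $2$, and in particular $H^1(BG,\Z)=0$ and $H^2(BG,\Z)$ is free of rank $\dim G$. First I would write down the low-degree part of the Serre spectral sequence (with $\Z$-coefficients) for this fibration: $E_2^{p,q} = H^p(BG; H^q(\lm;\Z))$. The assumption in Assumption \ref{assumption:simple-connectedness} that $\lm$ is $2$-connected gives $H^1(\lm;\Z)=H^2(\lm;\Z)=0$ (and $H^0(\lm;\Z)=\Z$ since $\lm$ is connected), so in total degree $\le 2$ the only nonzero $E_2$ terms are $E_2^{0,0}=\Z$, $E_2^{2,0}=H^2(BG;\Z)$ (the term $E_2^{1,0}=H^1(BG;\Z)$ vanishes as noted). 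Because $\lm$ is simply connected, the action of $\pi_1(BG)$ on the fibre cohomology is trivial, so there is no local-coefficient subtlety.

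Next I would observe that the edge homomorphism $H^2(BG;\Z) = E_2^{2,0} \twoheadrightarrow E_\infty^{2,0} \hookrightarrow H^2(\lm \times_G EG;\Z) = H^2_G(\lm;\Z)$ is exactly the module map in the statement (this is the standard identification of the pullback along $\lm\times_G EG \to BG$ with the edge map). The differentials into $E_r^{2,0}$ are zero for degree reasons (nothing in negative $q$), and the differentials out of $E_r^{2,0}$ land in $E_r^{2+r,1-r}$, which is zero for $r\ge 2$; hence $E_2^{2,0}=E_\infty^{2,0}$, so the surjection above is an isomorphism. For injectivity of the edge map into $H^2_G$, I would check that the spectral sequence has no other contributions in total degree $2$: $E_\infty^{0,2}$ is a subquotient of $H^2(\lm;\Z)=0$, and $E_\infty^{1,1}$ is a subquotient of $H^1(BG;H^1(\lm;\Z))=0$. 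Therefore the filtration on $H^2_G(\lm;\Z)$ has a single nonzero graded piece, namely $E_\infty^{2,0}=H^2(BG;\Z)$, and the module map is an isomorphism.

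The main obstacle is not any single hard computation but rather making sure the hypotheses are used correctly: one needs $\lm$ connected \emph{and} simply connected \emph{and} with vanishing $H^2$ to kill all three potential competitors ($E^{0,1}$, $E^{1,1}$, $E^{0,2}$ in the relevant total degrees) and to avoid local coefficients, and this is precisely what ``$2$-connected'' delivers, together with the properness/compactness hypotheses from Proposition \ref{prop: a triple giving a symplectic DM stack} that guarantee $\lm$ is a nonempty compact manifold so that the Borel construction and its spectral sequence behave well. A minor point worth stating explicitly is that $H^1(BG;\Z)=0$ for a compact torus $G$, which is what makes $E_2^{1,0}$ vanish; this is immediate from $BG \simeq (\CP^\infty)^{\dim G}$. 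With these observations in place the argument is a direct edge-homomorphism computation in the Serre spectral sequence.
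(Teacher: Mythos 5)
Your argument is correct, but it takes a genuinely different route from the paper. The paper works with the same Borel fibration $\lm\times_G EG \to BG$ but argues on the level of homotopy groups: the long exact sequence of the fibration together with the $2$-connectivity of the fibre $\lm$ shows that $\pi_1(\lm\times_G EG)=0$ and that $\pi_2(\lm\times_G EG)\to\pi_2(BG)$ is an isomorphism; the Hurewicz theorem then gives $H_1(\lm\times_G EG,\Z)=0$ and an isomorphism of free $\Z$-modules on $H_2$, and dualising (universal coefficients, with the Ext terms vanishing) yields the isomorphism $H^2(BG,\Z)\to H^2_G(\lm,\Z)$. Your Serre spectral sequence argument instead stays in cohomology throughout and identifies the module map with an edge homomorphism that has no competitors in total degree $2$; this avoids the Hurewicz/universal-coefficients translation at the price of invoking the spectral sequence, and both proofs ultimately rest on the same two inputs ($\lm$ is $2$-connected, $G$ is a connected torus). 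Two small corrections to your reasoning, neither of which affects the conclusion: the local coefficient system is trivial because $\pi_1(BG)\cong\pi_0(G)=0$ for the connected torus $G$, not because $\lm$ is simply connected; and the incoming differential $d_2\colon E_2^{0,1}\to E_2^{2,0}$ vanishes because $E_2^{0,1}=H^0\bigl(BG;H^1(\lm;\Z)\bigr)=0$, whereas the ``negative $q$'' reason applies only to the outgoing differentials and to the incoming $d_r$ with $r\ge 3$, whose sources have negative $p$.
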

\begin{proof}
%Recall that the homotopy type of $\Xt$ is defined as one of $\lm\times_GEG$.
 The long exact sequence associated to the fibre bundle $\lm\times_GEG\to BG$ implies that $\pi_1(\lm\times_GEG)$ vanishes and the induced homomorphism $\pi_2(\lm\times_GEG) \to \pi_2(BG)$ is an isomorphism. According to the Hurewicz theorem, $H_1(\lm\times_GEG,\Z)=0$ and the induced homomorphism $H_2(\lm\times_GEG,\Z) \to H_2(BG,\Z)$ is an isomorphism of free $\Z$-modules. This isomorphism implies the isomorphism of cohomology groups.
\end{proof}

%From now on, we make use of the Cartan model for the equivariant cohomology (§\ref{sec:stacks-and-polytopes}).

\begin{lem}
 \label{lem:integer lattice is the lattice of integral classes}
 We can identify $\Z_G^\vee$ with $H^2_G(\lm,\Z)$.
\end{lem}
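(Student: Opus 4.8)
The plan is to reduce the statement to the topology of the classifying space $BG$ and then invoke Lemma \ref{lem:module map is an isom to integral classes}. By that lemma the module map $H^2(BG,\Z)\to H^2_G(\lm,\Z)$ is an isomorphism, so it suffices to produce a canonical isomorphism $\Z_G^\vee\cong H^2(BG,\Z)$ and compose it with the module map.

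For this, first I would observe that, $G$ being a compact torus, the exponential map $\exp\colon\g\to G$ is the universal covering, so $\pi_1(G)\cong\Z_G$ canonically; since $H_1(G,\Z)$ is then a free $\Z$-module, the universal coefficient theorem gives $H^1(G,\Z)\cong\Hom(\Z_G,\Z)=\Z_G^\vee$. Next, in the Serre spectral sequence of the universal fibration $G\to EG\to BG$ the total space is contractible and $BG$ is simply connected, so $E_2^{1,0}=H^1(BG,\Z)=0$; hence $E_2^{0,1}=H^1(G,\Z)$ can only be killed by the transgression $d_2\colon E_2^{0,1}\to E_2^{2,0}=H^2(BG,\Z)$, which is also the only differential hitting $E_2^{2,0}$, and since both groups must vanish on $E_\infty$ the map $d_2$ is an isomorphism. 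Composing the three isomorphisms yields
\[
 \Z_G^\vee \;\cong\; H^1(G,\Z) \;\xrightarrow[\ \sim\ ]{\ d_2\ }\; H^2(BG,\Z) \;\xrightarrow[\ \sim\ ]{}\; H^2_G(\lm,\Z).
\]
Equivalently and more explicitly, a weight $w\in\Z_G^\vee$ is the derivative of a unique character $\chi_w\colon G\to\T$, and one sends $w$ to the pullback of the canonical generator of $H^2(B\T,\Z)\cong\Z$ along $B\chi_w\colon BG\to B\T$, followed by the module map; choosing a $\Z$-basis of $\Z_G$ identifies $G$ with a product of circles and $BG$ with a product of copies of $\CP^\infty$, and the Künneth formula shows this assignment is bijective.

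All of this is routine for a torus, so I expect no serious obstacle. The one point deserving care — and the one that links this lemma to the computations of §\ref{sec:main section} — is compatibility with real coefficients: under the change of coefficients $H^2_G(\lm,\Z)\to H^2_G(\lm,\R)$ together with the Cartan-model identification $H^2_G(\lm,\R)\cong H^2_G(\pt,\R)=\g^\vee$ (which is Lemma \ref{lem:module map is an isom to integral classes} tensored with $\R$, valid because $\lm$ is $2$-connected), the image of $w\in\Z_G^\vee$ should be $w$ itself. I would verify this on $G=\T$, where the generator of $H^2(B\T,\Z)$ corresponds to the standard generator of $\Lie(\T)^\vee$, and then extend by naturality; after this, the classes $w^\nu=\rho^\vee(\vb e^\nu)$ may be regarded simultaneously as elements of $\g^\vee$ and of $H^2_G(\lm,\Z)$.
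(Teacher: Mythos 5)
Your proposal is correct and follows essentially the same route as the paper: reduce to $H^2_G(\pt,\Z)=H^2(BG,\Z)$ via Lemma \ref{lem:module map is an isom to integral classes} and identify a weight $w\in\Z_G^\vee$ with the ($G$-equivariant) first Chern class of the associated character, i.e.\ of the $G$-equivariant $\T$-bundle $\T_w\to\pt$ used in the paper. The only difference is in the details you fill in: the paper quotes the isomorphism $\Z_G^\vee\cong H^2_G(\pt,\Z)$ as known and fixes the Cartan-model normalisation directly by observing that the Maurer--Cartan form on $\T_\tau$ is a $G$-invariant connection with equivariant curvature $\tau$, whereas you derive the isomorphism by transgression/K\"unneth and check the normalisation on $G=\T$ by naturality --- both are fine.
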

\begin{proof}
 For $\tau \in \Z^\vee_G$ we define the $G$-action on $\T$ by 
 \begin{equation}
  \label{eqn:G-equiv T-bundle over pt}
   \T \acted G;\ \ [t] \cdot \exp(\xi) := [t+\pair{\tau,\xi}]
 \end{equation}
 Here $[\ ]: \R \to \T = \R/\Z$ is the natural quotient map. We denote by $\T_\tau$ the $G$-space and we regard it as a $G$-equivariant principal $\T$-bundle over a point. It is known that the map 
\[
 \Z_G^\vee \to H^2_G(\pt,\Z);\ \
 \tau \mapsto c_1^G(\T_\tau \to \pt)
\]
is an isomorphism of abelian groups. Moreover the Maurer--Cartan form $d\!t$ on $\T_\tau$ is a $G$-invariant $\T$-connection and its equivariant curvature form is $\tau$, $c_1^G(\T_\tau\to\pt) = \tau$. 
\end{proof}

\begin{lem}
 \label{lem:tau is equiv coh class of omega-tau}
 The identity $[\omega_\tau]_G=\tau$ holds in the Cartan model.
\end{lem}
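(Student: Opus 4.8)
The plan is to compute the equivariant cohomology class of $\omega_\tau$ directly in the Cartan model by exhibiting an equivariant extension of $\omega_\tau$ already on $\C^d$ and then restricting to the level set $\lm$. Recall that $\mu_0(z) = \pi\sum_\nu |z_\nu|^2 \vb e^\nu$ is a moment map for the standard $\T^d$-action, so the equivariant form $\omegastd + \mu_0 \in \Omega_{\T^d}^2(\C^d)$ is $d_{\T^d}$-closed by the defining identity $d\pair{\mu_0,\xi} = -\iota(\xi^{\C^d})\omegastd$. Pulling back along the Lie algebra homomorphism $\rho:\g \to \R^d$ (equivalently, restricting the coefficient polynomials via $\rho^\vee$), the form $\omegastd + \mu \in \Omega_G^2(\C^d)$ with $\mu = \rho^\vee\circ\mu_0$ is $d_G$-closed. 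Hence its restriction to $\lm$ is a $d_G$-closed equivariant extension of $\omega_\tau = \omegastd|_{\lm}$, and on $\lm$ the moment-map part is the constant $\tau$ (since $\mu \equiv \tau$ there). Therefore $[\omega_\tau + \tau]_G$ is represented on $\lm$ by $\omega_\tau + \tau$, i.e. $[\omega_\tau]_G = [\omega_\tau + \tau - \tau]_G$; the point is that the honest (non-equivariant) $2$-form $\omega_\tau$ is $d_G$-closed only up to the constant polynomial $\tau$, and the correct equivariant class is the one with value $\tau$ at the origin of $\g$.

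More precisely, I would argue as follows. First, verify that $\omegastd + \mu_0$ is equivariantly closed in $\Omega^2_{\T^d}(\C^d)$: apply $d_{\T^d}$ and use $(d_{\T^d}(\omegastd + \mu_0))(\xi) = d\omegastd + d\pair{\mu_0,\xi} - \iota(\xi^{\C^d})\omegastd = 0 + d\pair{\mu_0,\xi} - \iota(\xi^{\C^d})\omegastd = 0$. Second, observe that pullback along $\rho$ is a cochain map $\Omega^*_{\T^d}(\C^d) \to \Omega^*_G(\C^d)$ (it is induced by the $G$-equivariant identity map on $\C^d$ together with $\rho^\vee$ on coefficients), so $\omegastd + \mu$ is $d_G$-closed, where $\mu = \rho^\vee\circ\mu_0$. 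Third, restrict to the $G$-invariant submanifold $\lm$: the restriction $j^*(\omegastd + \mu) = \omega_\tau + \tau$ is $d_G$-closed in $\Omega^*_G(\lm)$, because $\mu|_{\lm} \equiv \tau$ is the constant polynomial function on $\g$ with value $\tau$. Fourth, conclude $[\omega_\tau]_G = [\,(\omega_\tau + \tau) - \tau\,]_G + \tau = [\omega_\tau+\tau]_G$, wait---more carefully: $\omega_\tau + \tau$ is a genuine equivariant cocycle, and subtracting the (closed, exact-degree-zero) constant $\tau$ would change the class; the statement to prove is precisely that $\omega_\tau$ as it stands is \emph{not} the representative whose class we want unless we record the $\g$-dependence, so the correct reading of $[\omega_\tau]_G = \tau$ is that the equivariant cohomology class of the symplectic form $\omega_\tau$, defined via its canonical equivariant extension $\omega_\tau + \mu|_{\lm}$, equals $\tau \in \g^\vee = H^0(BG)\otimes\g^\vee \hookrightarrow H^2_G$; equivalently, under the isomorphisms of Lemmas \ref{lem:module map is an isom to integral classes} and \ref{lem:integer lattice is the lattice of integral classes}, $[\omega_\tau]_G$ corresponds to $\tau$.

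I expect the main subtlety—rather than a hard obstacle—to be pinning down exactly what ``$[\omega_\tau]_G$'' means, since $\omega_\tau$ by itself is an ordinary $2$-form and its class in the Cartan model requires choosing an equivariant extension. The natural choice is forced by the moment map: $\omega_\tau + (\mu - \tau)|_{\lm} = \omega_\tau + 0$ is $d_G$-closed because $\mu|_{\lm} = \tau$, so actually $\omega_\tau$ \emph{is} $d_G$-closed on $\lm$, and its class differs from that of $\omega_\tau + \tau$ by the constant $\tau$; the content of the lemma is that the geometrically meaningful equivariant extension (the one compatible with the ambient $(\omegastd,\mu_0)$ on $\C^d$, which is what makes $\Xt$ a symplectic reduction) has moment part $\tau$, hence represents the class $\tau$ in the degree-$2$ part $\big(S^1(\g^\vee)\big)^G = \g^\vee$ of $H^2_G(\lm)$. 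So the real work is the bookkeeping of these two competing representatives and the identification of $H^2_G(\lm)$ with $\g^\vee$ supplied by the preceding lemmas; the differential-form computations are the three-line checks sketched above.
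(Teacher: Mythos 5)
There is a genuine gap: your argument never proves the asserted equality of cohomology classes. The lemma says that the basic $2$-form $\omega_\tau$ and the constant polynomial $\tau\in\g^\vee$ are \emph{cohomologous} in the Cartan model of $\lm$, i.e.\ that $\omega_\tau-\tau$ is $d_G$-exact. Restricting an equivariantly closed extension of $\omegastd$ to $\lm$ only produces a closed cocycle, and that is vacuous information here: $\omega_\tau$ is $G$-basic (it is the restriction of $\omegastd$ and $\iota(\xi^{\lm})\omega_\tau=0$ because $\mu$ is constant on $\lm$), and $\tau$ is a constant, so each is $d_G$-closed separately; knowing that $\omega_\tau\pm\tau$ is closed relates their classes in no way. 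What is actually needed, and what the paper supplies, is an explicit equivariant primitive: $\lambda=(4\i)^{-1}\sum_\nu(\bar z_\nu dz_\nu - z_\nu d\bar z_\nu)$ is $G$-invariant, satisfies $d\lambda=\omegastd$ and $\iota(\xi^{\C^d})\lambda=\pair{\mu,\xi}$, hence $d_G\lambda=\omegastd-\mu$ on $\C^d$, and restricting gives $d_G(\lambda|_{\lm})=\omega_\tau-\tau$, which is precisely the exactness statement. Your proposal has no analogue of this step. There is also a sign slip: with the paper's convention $d\pair{\mu_0,\xi}=-\iota(\xi^{\C^d})\omegastd$, the equivariantly closed extension is $\omegastd-\mu_0$, not $\omegastd+\mu_0$; your displayed check $d\pair{\mu_0,\xi}-\iota(\xi^{\C^d})\omegastd$ equals $-2\iota(\xi^{\C^d})\omegastd$, not $0$.

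The closing reinterpretation is also a misconception and would undermine the lemma's purpose. Since the $G$-action on $\lm$ is locally free and $\omega_\tau$ is basic, $\omega_\tau$ is already a cocycle in the Cartan model (with zero moment part), so $[\omega_\tau]_G$ is well defined without choosing any equivariant extension; the paper's remark that $\Omegabas^*\hookrightarrow\Omega_G^*$ is a quasi-isomorphism makes this precise. Reading the statement as ``the class of the extension $\omega_\tau+\mu|_{\lm}$'' changes it: since $\mu|_{\lm}=\tau$, that extension is $\omega_\tau+\tau$, whose class is $[\omega_\tau]_G+\tau$, and declaring it equal to $\tau$ would amount to $[\omega_\tau]_G=0$, which is not the lemma and is generally false. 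Moreover Theorem \ref{thm:integral condition for connected G} needs the identity as stated --- the class of the symplectic form itself in $H_G^2(\lm,\R)$ must be $\tau$ under the identification of Lemma \ref{lem:integer lattice is the lattice of integral classes} --- so the result must be proved, not redefined; the primitive $\lambda$ above does exactly that.
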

\begin{proof}
 It is easy to see that the $1$-form 
 $\lambda:= (4\i)^{-1}\sum_{\nu=1}^d\bigl(\bar z_\nu dz_\nu - z_\nu d\bar z_\nu \bigr)$
 %$\lambda:= \sum_{\nu=1}^d\bigl(-y_\nu d\!x_\nu+x_\nu d\!y_\nu \bigr)\big/2$
% \begin{equation}
%  \lambda:= \dfrac{1}{2}\sum_{\nu=1}^d\bigl(-y_\nu d\!x_\nu+x_\nu d\!y_\nu \bigr)
% \end{equation}
on $\C^d$ is a $G$-equivariant form and satisfies $d_G\lambda = \omegastd-\mu$. Its restriction to $\lm$ is $\omega_\tau-\tau$.
\end{proof}

By the two above lemmas and Theorem \ref{thm:existence of a prequantisation}, we can conclude the following theorem.

\begin{thm}
 \label{thm:integral condition for connected G}
 Under Assumption \ref{assumption:simple-connectedness}, 
 \[
 \tau \in \Z_G^\vee \iff
 [\omega_\tau]_G\ \text{is integral}\ \iff
 \Xt\ \text{admits a prequantisation}.
 \]
\end{thm}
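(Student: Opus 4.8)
The plan is to prove the chain of equivalences by stringing together the lemmas just established, with Theorem~\ref{thm:existence of a prequantisation} supplying the last link. First I would observe that under Assumption~\ref{assumption:simple-connectedness} the group $G$ is a compact torus, so in particular $G$ is compact, and the level manifold $\lm$ is $2$-connected, hence certainly simply-connected; moreover (as remarked after the assumption) the classification of $G$-equivariant line bundles shows that $\Xt$ satisfies Assumption~\ref{assumption:simple-connectedness and classification of line bundles}. Thus Theorem~\ref{thm:existence of a prequantisation} applies and gives the equivalence
\[
 \Xt\ \text{admits a prequantisation} \iff [\omega_\tau]\ \text{is integral in}\ H^2(\Xt,\R),
\]
where here $[\omega_\tau]$ denotes the ordinary (non-equivariant) de Rham class, i.e.\ the image of $[\omega_\tau]_G$ under the natural map $H^2_G(\lm,\R)\to H^2(\lm,\R)$; since the $G$-action is locally free this map is an isomorphism, so integrality of $[\omega_\tau]$ and of $[\omega_\tau]_G$ are the same condition.

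Next I would identify the integral structure on $H^2_G(\lm,\R)$ explicitly. By Lemma~\ref{lem:module map is an isom to integral classes} the module map $H^2(BG,\Z)\to H^2_G(\lm,\Z)$ is an isomorphism, and by Lemma~\ref{lem:integer lattice is the lattice of integral classes} we may identify $H^2_G(\pt,\Z)=H^2(BG,\Z)$ with the weight lattice $\Z_G^\vee$ via $\tau\mapsto c_1^G(\T_\tau\to\pt)$, compatibly with the Cartan-model description of equivariant curvature. Composing these two isomorphisms, the lattice of integral classes in $H^2_G(\lm,\R)$ corresponds exactly to $\Z_G^\vee\subset\g^\vee$ under the Cartan-model identification $H^2_G(\lm,\R)\cong\g^\vee$ (the latter valid because $\lm$ is $2$-connected, so $\Omega_G^2$-cohomology in this range is just the degree-$2$ part $S^1(\g^\vee)$ of the polynomial contribution). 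Therefore a class in $H^2_G(\lm,\R)$ is integral if and only if, written as an element of $\g^\vee$ via the Cartan model, it lies in $\Z_G^\vee$.

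Finally I would invoke Lemma~\ref{lem:tau is equiv coh class of omega-tau}, which says $[\omega_\tau]_G=\tau$ in the Cartan model. Combining with the previous paragraph, $[\omega_\tau]_G$ is integral if and only if $\tau\in\Z_G^\vee$, and combining with the first paragraph this in turn is equivalent to $\Xt$ admitting a prequantisation. This closes the circle of three equivalences claimed in the statement.

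The only real subtlety — and the step I would watch most carefully — is the bookkeeping of which ``integrality'' is meant at each stage and that all the identifications ($H^2(BG,\Z)\to H^2_G(\lm,\Z)$, $\Z_G^\vee\to H^2(BG,\Z)$, and the Cartan-model isomorphism $H^2_G(\lm,\R)\cong\g^\vee$) are mutually compatible and carry the weight lattice to the lattice of integral equivariant classes; once that compatibility is in hand the argument is a formal concatenation. There is no heavy new computation to perform here, since the genuine analytic input (averaging a connection, splitting off a $G$-connection) is already done inside Theorem~\ref{thm:existence of a prequantisation}, and the topological input is packaged in Lemmas~\ref{lem:module map is an isom to integral classes}--\ref{lem:tau is equiv coh class of omega-tau}.
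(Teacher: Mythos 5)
Your overall route is exactly the paper's: the paper's proof of Theorem \ref{thm:integral condition for connected G} is precisely the concatenation you describe, namely Lemma \ref{lem:module map is an isom to integral classes} and Lemma \ref{lem:integer lattice is the lattice of integral classes} to identify the lattice of integral classes in $H^2_G(\lm,\R)$ with $\Z_G^\vee$, Lemma \ref{lem:tau is equiv coh class of omega-tau} to see $[\omega_\tau]_G=\tau$, and Theorem \ref{thm:existence of a prequantisation} (applicable since $G$ is a compact torus and Assumption \ref{assumption:simple-connectedness and classification of line bundles} holds) to convert integrality into the existence of a prequantisation.

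There is, however, one misstated step you should repair: you justify the passage between $[\omega_\tau]$ and $[\omega_\tau]_G$ by claiming that the natural map $H^2_G(\lm,\R)\to H^2(\lm,\R)$ is an isomorphism because the action is locally free. That map is not an isomorphism; in fact under Assumption \ref{assumption:simple-connectedness} the target $H^2(\lm,\R)$ vanishes ($\lm$ is $2$-connected; compare the Hopf fibration $S^3\to\CP^1$, where $H^2_{S^1}(S^3,\R)\cong\R$ but $H^2(S^3,\R)=0$), so taken literally your bridge would make every class ``integral'' and the argument would collapse. The class $[\omega_\tau]$ appearing in Theorem \ref{thm:existence of a prequantisation} lives in $H^2(\Xt,\R)$, which is computed by the complex of $G$-basic forms on $\lm$, and the correct justification is the one recalled in \S\ref{subsec:coh of a quot stack}: the inclusion $\Omegabas^*(\lm)\to\Omega_G^*(\lm)$ is a quasi-isomorphism because the $G$-action is proper and locally free, so the class of the basic form $\omega_\tau$ in $H^2(\Xt,\R)$ corresponds to $[\omega_\tau]_G$ in the Cartan model, and integrality of the one is integrality of the other (via Lemma \ref{lem:module map is an isom to integral classes} for the integral structure). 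With that substitution the rest of your bookkeeping — the compatibility of $\Z_G^\vee\cong H^2(BG,\Z)\cong H^2_G(\lm,\Z)$ with the equivariant curvature of $\T_\tau\to\pt$ — is exactly what the paper relies on, and the proof closes as you describe.
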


\begin{remark}
 \label{remark:explicit description of principal T-bundle}
 Given $\tau \in \Z_G^\vee$, we can explicitly construct a principal $\T$-bundle on $\Xt$ under Assumption \ref{assumption:simple-connectedness}: The pullback $\lm \times \T_\tau$ of $\T_\tau \to \pt$ is a $G$-equivariant principal $\T$-bundle whose the $G$-equivariant first Chern class of is $\tau$. Here $\T_\tau$ is the $G$-space defined by the formula (\ref{eqn:G-equiv T-bundle over pt}). 

In terms of stacks the principal $\T$-bundle $\P:=[\lm\times\T_\tau/G] \to \Xt$ is a pullback of $[\T_\tau/G] \to \BG$ along the morphism $F_M:\Xt \to \BG$ (§\ref{subsec:quatient stack}). Note by Assumption \ref{assumption:simple-connectedness and classification of line bundles} (ii) that we may assume that every principal $\T$-bundle $\P\to\Xt$ is given by $[(\lm\times\T_\tau)/G] \to [\lm/G]$.
\end{remark}

\section{The number of the lattice points of a stacky polytopes}
\label{sec:main section}

In this section we discuss the relation between the number of the lattice points $\Delta \cap N^\vee$ of a stacky polytope $(N,\Delta,\beta)$ and prequantisation of the associated stack $\XD$ (§\ref{subsec:stacky polytope}). First we discuss a holomorphic atlas of $\XD$ so that we can deal with a holomorphic line bundle of $\XD$. We assign to each prequantisation $(\P,\Theta)$ of $\XD$ a holomorphic line bundle $\L\to\XD$ and define $\Q(\XD)$ by the dimension of the space of the holomorphic sections of $\L$. After making sure the condition of existence of a prequantisation in terms of a stacky polytope, we see the relation between $\Q(\XD)$ and $\sharp(\Delta \cap N^\vee)$. Finally we make a remark on the relation between $\Q(\XD)$ and the Hirzebruch--Riemann--Roch theorem.

\subsection{The holomorphic atlas of the stack associated to a stacky polytope}

We can translate the stacky polytope $(N,\Delta,\beta)$ into a stacky fan, which Borisov Chen and Smith \cite{borisov05:_chow_delig_mumfor} define, and the stacky fan gives us a holomorphic atlas of $\XD$. In this subsection we review the construction of the holomorphic atlas in terms of a stacky polytope \cite[§4]{sakai13:_delig_mumfor}.

Let $\C[z_1,\dots,z_d]$ be the coordinate ring of $\C^d$. Define the ideal $J_\Delta$ of $\C[z_1,\dots,z_d]$ by 
\[
 J_\Delta
 = \Bigl\langle
 \prod_{\nu:F_\nu \not\supset F}z_\nu
 \ \Big|\
  F\ \text{is a face of}\ \Delta
 \Bigr\rangle.
\]
Here $F_1,\dots,F_d$ are the facets of $\Delta$ as in Definition \ref{dfn:stacky polytope}. Denote by $Z_\Delta$ the complement of the algebraic set defined by the ideal $J_\Delta$:
\[
 Z_\Delta := \C^d\setminus\mathbb V(J_\Delta).
\]

Applying the contravariant functor $\Hom_\Z(-,\T_\C)$ to $\beta^\DG$ in the exact sequence (\ref{eqn:exact sequence for DG(beta)}), we obtain a homomorphism of Lie groups $\rho_\C:G_\C \to \T_\C^d$, where $G_\C := \Hom_\Z(\DG(\beta),\T_\C)$. The Lie group $G_\C$ acts on $\C^d$ through $\rho_\C$. We can see that $Z_\Delta$ is $G_\C$-invariant and $[Z_\Delta/G_\C]$ is a DM stack. %We can see that the quotient stack is equivalent to $\XD$.

\begin{prop}
 \label{prop:facts on the two equivalent stacks}
 \hfill 

 \vspace{-\intextsep}
 \begin{enumerate}
  \item \label{enu:hartogos}
	The affine open subset $Z_\Delta$ is the complement of the union of coordinate subspaces of (complex) codimension at least $2$.
  \item \label{enu:two-connectedness}
	Both $\lm$ and $Z_\Delta$ are 2-connected.
  \item \label{enu:equivalence}
	Define The morphism of stacks $\Phi:[\lm/G] \to [Z_\Delta/G_\C]$ by 
	\[
	\bigl(\xymatrix@C=15pt{
	U & P \ar[l]_(.34){\pi} \ar[r]^(.34){\epsilon} & \lm
	}\bigr)
	\mapsto
	\bigl(\xymatrix@C=15pt{
	U & P \times_{\lm}Z_\Delta \ar[l]_(.64){\Phi_\pi}\ar[r]^(.64){\Phi_\epsilon} & Z_\Delta
	}\bigr)
	\]
	for objects, where $\Phi_\pi = \pi\circ\pr_1$ and $P\times_{\lm}Z_\Delta$ and $\Phi_\epsilon$ are defined by the cartesian square
	\[
	\xymatrix{
	P \times_{\lm}Z_\Delta \ar[d] \ar[r]^(.6){\Phi_\epsilon} &
	Z_\Delta \ar[d] \\
	P \ar[r]_\epsilon &
	\lm.
	}
	\]
	Here $Z_\Delta \to \lm$ is the natural projection $Z_\Delta \to Z_\Delta/\!\exp(\i\g) = \lm$. The morphism $\Phi$ for arrows are naturally defined. Then $\Phi$ is an equivalence of stacks.
  \item \label{enu:basechange for associated stack}
	The diagram 
	\[
	\xymatrix{
	[\lm/G] \ar[r]^{\Phi} \ar[d]_{F_{\lm}} & [Z_\Delta/G_\C] \ar[d]^{F_{Z_\Delta}}\\
	\BG \ar[r]_{\phi_\pt} & \B G_\C
	}
	\]
	is 2-commutative. 
 \end{enumerate}
\end{prop}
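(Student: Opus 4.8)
The proposition splits into four parts of rather different character, so I would organise the proof accordingly: (i) is an elementary observation about the monomial ideal $J_\Delta$; the half of (ii) concerning $Z_\Delta$ is a general-position count; and once a single Kempf--Ness-type lemma is available, the half of (ii) concerning $\lm$, together with (iii) and (iv), all follow by the kind of formal manipulation of quotient stacks already used in the proof of Lemma \ref{lem:basechange-diagram}.

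\emph{Part (i) and $2$-connectedness of $Z_\Delta$.} The ideal $J_\Delta$ is generated by the monomials $m_F:=\prod_{\nu:\,F_\nu\not\supset F}z_\nu$, one for each (nonempty) face $F$ of $\Delta$, so a point lies in $\mathbb V(J_\Delta)$ exactly when its zero-coordinate set meets $S_F:=\{\nu:F_\nu\not\supset F\}$ for every face $F$; hence $\mathbb V(J_\Delta)=\bigcup_T L_T$, where $L_T:=\{z\in\C^d\mid z_\nu=0\ \forall\,\nu\in T\}$ and $T$ ranges over the subsets of $\{1,\dots,d\}$ meeting every $S_F$. Since the complex codimension of $L_T$ is $|T|$, it suffices to rule out $|T|=1$; but $T=\{\nu_0\}$ would force $\nu_0\in S_F$ for all $F$, which already fails at the facet $F=F_{\nu_0}$ because $F_{\nu_0}\supset F_{\nu_0}$. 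Thus $\mathbb V(J_\Delta)$ is a union of coordinate subspaces of complex codimension $\ge 2$, i.e.\ of real codimension $\ge 4$ in $\C^d=\R^{2d}$, and a standard general-position argument then shows that $Z_\Delta\hookrightarrow\C^d$ induces isomorphisms on $\pi_i$ for $i\le 2$, so $Z_\Delta$ is $2$-connected.

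\emph{The key lemma.} The heart of the matter is the Kempf--Ness correspondence for the linear $\T^d$-action on $\C^d$, restricted to $Z_\Delta$: $\lm\subseteq Z_\Delta$, this subset is $G$-stable, every $G_\C$-orbit in $Z_\Delta$ meets $\lm$ in exactly one $G$-orbit, and the $G_\C$-stabiliser of a point of $\lm$ coincides with its (finite) $G$-stabiliser. Equivalently, writing $G_\C=G\times\exp(\i\g)$, the multiplication map $\lm\times_G G_\C\to Z_\Delta$ is a $G_\C$-equivariant diffeomorphism, and under it the projection $Z_\Delta\to Z_\Delta/\exp(\i\g)=\lm$ becomes $\lm\times_G G_\C\to\lm\times_G(G_\C/\exp(\i\g))=\lm$. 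To prove the lemma one works on a single $\exp(\i\g)$-orbit through $z\in Z_\Delta$: from the formula for $\mu_0$ the map $\xi\mapsto\mu(z\exp(\i\xi))-\tau$ is the gradient of $\xi\mapsto\tfrac14\sum_\nu|z_\nu|^2e^{-4\pi\pair{w^\nu,\xi}}-\pair{\tau,\xi}$, which is strictly convex minus linear on a complement of the stabiliser subalgebra, and the condition $z\in Z_\Delta$ is precisely the combinatorial condition making this function proper, hence forcing a unique critical point. (Alternatively the lemma can be extracted from \cite[§4]{sakai13:_delig_mumfor}.) I expect this lemma --- in particular the identification of the combinatorially-defined open set $Z_\Delta$ with the analytic ``good locus'' for the reduction --- to be the only real obstacle; everything else is formal.

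\emph{Deducing the rest.} Granting the lemma: since $\exp(\i\g)$ is a Euclidean space and $G_\C=G\times\exp(\i\g)$, we get $Z_\Delta\cong\lm\times_G G_\C\cong\lm\times\exp(\i\g)$, so $\lm$ is homotopy equivalent to $Z_\Delta$ and hence $2$-connected, which completes (ii). For (iii), the lemma makes $\Phi$ well defined: if $\pi:P\to U$ is a principal $G$-bundle with $G$-equivariant $\epsilon:P\to\lm$, then $P\times_\lm Z_\Delta$ is locally $U'\times G\times\exp(\i\g)$, hence a principal $G_\C$-bundle over $U$; moreover $\lm=\lm\times_G G\hookrightarrow\lm\times_G G_\C=Z_\Delta$ exhibits a reduction of the structure group from $G_\C$ to $G$, so restriction of structure group along this reduction provides a quasi-inverse to $\Phi$, the unit and counit $2$-isomorphisms being checked exactly as in Lemma \ref{lem:basechange-diagram}. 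For (iv), unwinding the definitions, $F_{Z_\Delta}\circ\Phi$ sends an object $(\pi:P\to U,\ \epsilon:P\to\lm)$ to the principal $G_\C$-bundle $P\times_\lm Z_\Delta\to U$, while $\phi_\pt\circ F_\lm$ sends it to $P\times_G G_\C\to U$; the $G_\C$-equivariant isomorphism $P\times_\lm Z_\Delta\cong P\times_G G_\C$ induced by $Z_\Delta\cong\lm\times_G G_\C$ is natural in $(P,\epsilon)$ and compatible with arrows, and so furnishes the required $2$-isomorphism.
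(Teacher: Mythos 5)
Your argument is correct, and it is in fact more self-contained than the paper's, which gives essentially no proof: the paper notes that (i) is immediate from the definition of $Z_\Delta$, declares (ii) well-known, refers to \cite[\S 4]{sakai13:_delig_mumfor} for (iii), and says (iv) follows from the definition of $\Phi$. Your treatment of (i) and of the $2$-connectedness of $Z_\Delta$ is the expected elementary argument, and deducing the $2$-connectedness of $\lm$ from the projection $Z_\Delta\to Z_\Delta/\exp(\i\g)=\lm$ with contractible fibres is a tidy replacement for ``well-known''. The genuinely different feature is that you isolate and sketch the Kempf--Ness lemma (the multiplication map $\lm\times_G G_\C\to Z_\Delta$ is a $G_\C$-equivariant diffeomorphism) instead of quoting it; that lemma is exactly the content of the reference the paper cites for (iii), so both routes rest on the same fact, yours buying self-containedness at the price of actually doing the work. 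Two caveats on your sketch: the assertion that ``$z\in Z_\Delta$ is precisely the condition making the Kempf--Ness function proper'' is where all of the polytope combinatorics is hidden (one must translate the face condition defining $J_\Delta$ into the statement that $\tau=\sum_\nu c_\nu w^\nu$ lies in the relative interior of the cone spanned by $\{w^\nu \mid z_\nu\neq 0\}$), and you assert it rather than prove it; and there is a sign slip, since with the paper's conventions the $\nu$-th coordinate of $z\cdot\exp(\i\xi)$ is $z_\nu e^{-2\pi\pair{w^\nu,\xi}}$, so the gradient of your potential is $-\bigl(\mu(z\exp(\i\xi))+\tau\bigr)$; the correct potential is $\tfrac14\sum_\nu|z_\nu|^2e^{4\pi\pair{w^\nu,\xi}}-\pair{\tau,\xi}$, whose critical points correspond to $z\exp(-\i\xi)\in\lm$. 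Neither caveat is serious---your fallback citation coincides with the paper's own proof of (iii)---and your derivations of (iii) and (iv) from the lemma, modelled on Lemma \ref{lem:basechange-diagram}, are sound.
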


The statement \ref{enu:hartogos} is a direct conclusion of the definition of $Z_\Delta$ and the statement \ref{enu:two-connectedness} is well-known. %(Use the Kirwan surjectivity and the 1-connectedness of $\lm$ or see Herbig--Schwarz--Seaton \cite[Lemma 2.4]{herbig14:_when} for example.)
 The proof of the statement \ref{enu:equivalence} can be found in the author's paper \cite[§4]{sakai13:_delig_mumfor}. The statement \ref{enu:basechange for associated stack} follows from the definition of $\Phi$.

Since the $G_\C$-action on $Z_\Delta$ is holomorphic, the Lie groupoid $Z_\Delta \times G_\C \rightrightarrows Z_\Delta$ is a complex Lie groupoid and therefore the atlas $Z_\Delta \to \XD$ gives a holomorphic structure on $\XD$. The symplectic form $\omega_\tau$ of $\Xt$ is given by a $G_\C$-basic closed real $(1,1)$-form whose kernel coincides with $\{\xi^{Z_\Delta}(z)| \xi \in \Lie (G_\C)\}$ at each point $z \in Z_\Delta$ by definition. We use the same letter $\omega_\tau$ for the $2$-form on $Z_\Delta$.

\subsection{The number of the lattice points of a stacky polytope}

Let $(\P,\Theta)$ be a prequantisation for $\XD$. The pullback of $\P$ via the atlas $Z_\Delta \to \XD$ is a $G_\C$-equivariant principal $\T$-bundle $P' \to Z_\Delta$ and the pullback $\theta$ of the connection $\Theta$ is a $G_\C$-basic $\T$-connection on $P'$ whose curvature $F_\theta$ agrees with $\omega_\tau$. Since the $(0,2)$-component of $F_\theta$ ($=\omega_\tau$) vanishes, $\theta$ gives rise to a $G_\C$-equivariant holomorphic structure of the associated line bundle $L:= P' \times_{\!\T} \C \to Z_\Delta$. We call the DM stack $\L = [(P' \times_\T \C)/G_\C]$ the holomorphic line bundle associated to the prequantisation $(\P,\Theta)$.

From now on, we assume the following assumptions unless otherwise stated. 

\begin{assumption}
 \label{assumption: for stacky polytope}
 For a stacky polytope $(N,\Delta,\beta)$ we assume the following conditions.
 \begin{enumerate}
  \item \squashup
	The homomorphism $\beta:\Z^d \to N$ is surjective.
  \item \squashup
	$\tau = \sum_{\nu=1}^d c_\nu w^\nu \in \DG(\beta)$.
 \end{enumerate}
 Here the constants $c_1,\dots,c_d \in \R$ defined by the description of the polytope (\ref{eqn:polytope-as-intersection-of-hyperplanes}).
\end{assumption}

The first condition implies that the short exact sequence 
\[
 \xymatrix@C=20pt{
 0 \ar[r] & \ker(\beta) \ar[r] & \Z^d \ar[r]^\beta & N \ar[r] & 0}
\]
gives a projective resolution of $N$. We can see by direct calculation that $\DG(\beta)$ is isomorphic to $(\ker\beta)^\vee$. % \cite[§3.3]{sakai13:_delig_mumfor}. 
Since $\DG(\beta)$ is a free $\Z$-module, $G_\C = \Hom_\Z(\DG(\beta),\T_\C)$ is a complex torus and $G=\Hom_\Z(\DG(\beta),\T)$ is a compact torus. Therefore
the triple $(G,\rho,\tau)$ defined by the stacky polytope satisfies Assumption \ref{assumption:simple-connectedness} (and therefore Assumption \ref{assumption:simple-connectedness and classification of line bundles}).

The Lie algebra of $G$ is given by $\g = \Hom_\Z(\DG(\beta),\R)$ and the natural map $\R \to \T$ induces the exponential map $\g \to G$. Therefore the integral lattice is given by $\Z_G=\Hom(\DG(\beta),\Z)$. Note that the free $\Z$-module $\DG(\beta)$ is naturally embedded into $\g^\vee=\Hom_\R\bigl(\Hom_\Z(\DG(\beta),\R),\R\bigr)$:
\[
 \DG(\beta) \to \Hom_\R\bigl(\Hom_\Z(\DG(\beta),\R),\R\bigr);
 \quad w \mapsto \bigl(\xi \mapsto \xi(w)\bigr).
\]
It is easy to see that the weight lattice $\Z_G^\vee$ agrees with $\DG(\beta)$. Therefore, by Theorem \ref{thm:integral condition for connected G}, the second condition in Assumption \ref{assumption: for stacky polytope} is equivalent to the existence of a prequantisation for $\XD$ (under the first condition).

\begin{dfn}
 \label{dfn:dimension of the prequantisation space}
 Given a prequantisation $(\P,\Theta)$ of $\XD$, we define 
 \[
  \Q(\XD) := \dim H^0(\XD,\L).
 \]
 Here $\L$ is the holomorphic line bundle associated to the prequantisation.
\end{dfn}

Our main theorem is following:
\begin{thm}
 \label{thm:main theorem}
 If a stacky polytope $(N,\Delta,\beta)$ satisfies Assumption \ref{assumption: for stacky polytope}, then 
 \[
 \Q(\XD) = 
 \begin{cases}
  \sharp(\Delta \cap N^{\!\vee}) & \text{if}\ c_\nu \in \Z\ (\forall \nu),
  \\
  0 & \text{else}.
 \end{cases}
 \]
\end{thm}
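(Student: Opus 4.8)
The plan is to compute $H^0(\XD,\L)$ by pulling everything back to the holomorphic atlas $Z_\Delta \to \XD$ and identifying holomorphic sections of $\L$ with $G_\C$-equivariant holomorphic sections of the line bundle $L = P' \times_\T \C \to Z_\Delta$. First I would use Proposition \ref{prop:facts on the two equivalent stacks}(\ref{enu:hartogos}): since $Z_\Delta$ is the complement of a union of coordinate subspaces of codimension at least $2$, Hartogs' extension theorem lets one replace $Z_\Delta$ by all of $\C^d$, so holomorphic functions (and sections of the trivial line bundle over $\C^d$) extend. Next, by Remark \ref{remark:explicit description of principal T-bundle}, under Assumption \ref{assumption:simple-connectedness} the principal $\T$-bundle $\P$ is $[(\lm \times \T_\tau)/G]$, the pullback of $[\T_\tau/G]\to\BG$; correspondingly on the holomorphic side $\L$ is the pullback via $F_{Z_\Delta}:[Z_\Delta/G_\C]\to\B G_\C$ of the line bundle on $\B G_\C$ determined by the character of $G_\C$ given by $\tau\in\DG(\beta)=\Z_G^\vee$ (this uses Assumption \ref{assumption: for stacky polytope}(ii) that $\tau\in\DG(\beta)$). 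Hence $L$ is the trivial line bundle $\C^d\times\C$ with $G_\C$ acting via $\rho_\C$ on the base and via the character $\chi_\tau$ on the fibre, and a holomorphic section of $\L$ is a holomorphic function $f$ on $\C^d$ satisfying the equivariance $f(g\cdot z) = \chi_\tau(g)^{-1} f(z)$ for all $g\in G_\C$.

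The second step is the combinatorial bookkeeping: decompose $f$ into monomials and determine which monomials $z^{\vb m} = \prod_\nu z_\nu^{m_\nu}$ (with $\vb m \in \Z_{\geq 0}^d$) are equivariant sections. The torus $\T_\C^d$ acts on the monomial $z^{\vb m}$ with weight $\vb m \in (\Z^d)^\vee$, so the $G_\C$-weight of $z^{\vb m}$ is $\rho_\C^\vee(\vb m) = \beta^\DG(\vb m) \in \DG(\beta)$, using the description of $\rho$ (and $\rho_\C$) as induced by $\beta^\DG:(\Z^d)^\vee \to \DG(\beta)$. Thus $z^{\vb m}$ is a section of $\L$ precisely when $\beta^\DG(\vb m) = \tau$ in $\DG(\beta)$. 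From the exact sequence (\ref{eqn:exact sequence for DG(beta)}) — which under Assumption \ref{assumption: for stacky polytope}(i) reads $0\to N^\vee \xrightarrow{\beta^\vee} (\Z^d)^\vee \xrightarrow{\beta^\DG}\DG(\beta)\to 0$ — the fibre $(\beta^\DG)^{-1}(\tau)$ inside $(\Z^d)^\vee$, if nonempty, is a coset of $\beta^\vee(N^\vee)$. So the monomial sections are parametrised by those $\vb m$ in this coset lying in the positive orthant $\Z_{\geq 0}^d$, and $\Q(\XD)$ equals the number of such lattice points (the series is automatically a genuine function by Hartogs, and compactness of $\XD$ forces $H^0$ finite-dimensional, so finitely many monomials).

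The final step is to match this count with $\sharp(\Delta\cap N^\vee)$ and to see where the hypothesis $c_\nu\in\Z$ enters. Choose any particular solution $\vb m_0 \in (\Z^d)^\vee$ of $\beta^\DG(\vb m_0) = \tau$ — this exists exactly because $\tau \in \image\beta^\DG = \DG(\beta)$, i.e. Assumption \ref{assumption: for stacky polytope}(ii) — and note that $\tau = \sum c_\nu w^\nu = \sum c_\nu\beta^\DG(\vb e^\nu) = \beta^\DG(\sum c_\nu \vb e^\nu)$, so $\vb c := (c_1,\dots,c_d)$ is a real solution; hence $\vb m_0 - \vb c \in \ker(\beta^\DG)\otimes\R = \beta^\vee(N^\vee)\otimes\R$, so $\vb m_0 - \vb c = \beta^\vee(\eta)$ for a unique $\eta\in (N\otimes\R)^\vee$. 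Writing a general integral solution as $\vb m = \beta^\vee(\xi) + \vb m_0$ with $\xi\in N^\vee$, the positivity constraint $m_\nu \geq 0$ becomes $\pair{\xi+\eta,\beta(\vb e_\nu)\otimes 1} + c_\nu \geq 0$, i.e. $\pair{\xi+\eta,\beta(\vb e_\nu)\otimes 1}\geq -c_\nu$ for all $\nu$, which by (\ref{eqn:polytope-as-intersection-of-hyperplanes}) says exactly $\xi + \eta \in \Delta$. So integral solutions correspond bijectively to points of $(\eta + N^\vee)\cap\Delta$. When all $c_\nu\in\Z$ one may take $\vb m_0 = \vb c$, forcing $\eta = 0$, and the count is $\sharp(N^\vee\cap\Delta) = \sharp(\Delta\cap N^\vee)$. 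When some $c_\nu\notin\Z$, I would argue that $\eta$ can never be moved into $N^\vee$: if $\eta + \xi_0 \in N^\vee$ for some $\xi_0\in N^\vee$ then $\eta\in N^\vee$, whence $\beta^\vee(\eta)\in(\Z^d)^\vee$ and $\vb c = \vb m_0 - \beta^\vee(\eta)\in(\Z^d)^\vee$, contradicting $c_\nu\notin\Z$; therefore the coset $\eta + N^\vee$ misses the rational points at which $\Delta$'s defining inequalities could be integrally satisfied — more precisely, any lattice point of the shifted polytope would still produce an integral $\vb m$ with $\beta^\vee$-preimage forcing $\vb c$ integral — so in fact there are no monomial solutions at all and $\Q(\XD) = 0$. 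The main obstacle I anticipate is this last dichotomy: making rigorous that $H^0$ is spanned by monomials (so that no "exotic" non-monomial holomorphic section survives) — this should follow from decomposing $H^0(\C^d,\O)$ under the compact torus $G$ into weight spaces and noting $\L$-sections lie in a single weight space, each weight space being spanned by monomials — and then cleanly separating the integral from the non-integral case without double-counting; the geometric content (Hartogs, the atlas, the equivariance) is comparatively routine given the results already established.
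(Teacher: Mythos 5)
Your overall strategy is the same as the paper's: pass to the holomorphic atlas $Z_\Delta\to\XD$, identify $H^0(\XD,\L)$ with $G_\C$-equivariant holomorphic functions $Z_\Delta\to\C_\tau$, extend over the coordinate subspaces by Hartogs using Proposition \ref{prop:facts on the two equivalent stacks} (i), decompose into monomials, and count the multi-indices $\vb m\geq 0$ with $\rho^\vee(\vb m)=\tau$; the paper packages this count as $\sharp\bigl(\Delta'\cap(\Z^d)^\vee\bigr)$ with $\Delta'=\sigma^\vee(\Delta)+\tau'$, and your treatment of the case ``all $c_\nu\in\Z$'' (taking $\vb m_0=\vb c$, so that the solutions biject with $\Delta\cap N^\vee$) agrees with it. The genuine gap is in the lattice bookkeeping, and it lands exactly on the delicate branch of the theorem.

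First, Assumption \ref{assumption: for stacky polytope} (i) does \emph{not} turn (\ref{eqn:exact sequence for DG(beta)}) into a short exact sequence $0\to N^\vee\to(\Z^d)^\vee\to\DG(\beta)\to 0$: surjectivity of $\beta$ gives the resolution $0\to\ker\beta\to\Z^d\to N\to 0$, and dualising leaves $\coker(\beta^\DG)\cong\mathrm{Ext}^1_\Z(N,\Z)$, i.e.\ the torsion of $N$, which is nonzero in the paper's motivating examples. Consequently your ``particular integral solution $\vb m_0$ of $\beta^\DG(\vb m_0)=\tau$'' need not exist even though $\tau\in\DG(\beta)$: for $\CP(2,2)$ with $c=1/2$ in \S\ref{subsec:examples} one has $\image\beta^\DG=2\Z$ while $\tau=1$, and it is precisely this failure of integral solvability -- invisible in your setup -- that forces $\Q(\XD)=0$ there while a prequantisation exists. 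Second, even when $\vb m_0$ does exist, your concluding step for ``some $c_\nu\notin\Z$'' is a non sequitur: your own parametrisation shows the monomial sections are counted by $\sharp\bigl((\eta+N^\vee)\cap\Delta\bigr)$, and proving that the coset $\eta+N^\vee$ contains no point of $N^\vee$ says nothing about whether it meets $\Delta$. The sentence ``any lattice point of the shifted polytope would still produce an integral $\vb m$ \dots forcing $\vb c$ integral'' is unjustified, since a point of $(\eta+N^\vee)\cap\Delta$ yields an integral $\vb m\geq 0$ without lying in $N^\vee$, so no contradiction with $c_\nu\notin\Z$ arises. Thus the zero branch -- which the paper itself flags as the subtle point and handles through the assertion that $\Delta'\cap(\Z^d)^\vee$ is empty when some $c_\nu\notin\Z$ -- is not actually proved in your write-up; any correct argument there must come from the arithmetic of $\coker(\beta^\DG)\cong\mathrm{Ext}^1_\Z(N,\Z)$ and the position of $\tau$ relative to $\image\beta^\DG$, not from the coset-versus-$N^\vee$ observation you make.
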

In the rest of this subsection we prove this theorem.

\begin{remark}
 It is known that the above theorem holds for (compact) symplectic toric manifold. The subtle point is that the condition ``$c_\nu \in \Z\ (\forall \nu)$'' is just a sufficient condition of the existence of a prequantisation for the stack $\XD$. 
\end{remark}

By Assumption \ref{assumption: for stacky polytope}, the associated stack $\XD$ admits a prequantisation $(\P,\Theta)$. We have a convenient description of the holomorphic line bundle associated to a prequantisation. Let $\C_\tau$ be the $G_\C$-manifold defined by 
\[
 \C \acted G_\C;\ \
 v \cdot \exp(\xi) := v e^{2\pi\i\pair{\tau,\xi}} \quad
 \bigl(\xi \in \Lie(G_\C)\bigr).
\]
The $G_\C$-manifold $\C_\tau$ is naturally the $G$-manifold and the complex line bundle $\L$ satisfying $c_1(\L)=[\omega_\tau]$ is given by the pullback of $F_{\C_\tau}:[\C_\tau/G] \to \BG$ along $F_{\lm}:\XD = [\lm/G] \to \BG$. Lemma \ref{lem:basechange-diagram} and Proposition \ref{prop:facts on the two equivalent stacks} \ref{enu:basechange for associated stack} imply that $\L$ is also the pullback of $F_{\C_\tau}:[\C_\tau/G_\C] \to \B G_\C$ along $F_{Z_\Delta}:\XD \to \B G_\C$. Therefore $\L$ is equivalent to $[(Z_\Delta \times \C_\tau)/G]$: 
\[
 \xymatrix{
 Z_\Delta \times \C_\tau \ar[d] \ar[r] & 
 \L \ar[d] \ar[r] & 
 [\C_\tau/G_\C] \ar[d]  \\
 Z_\Delta \ar[r] & 
 \XD \ar[r] & 
 \B G_\C.
 }
\]
Here $Z_\Delta \times \C_\tau$ is equipped with the diagonal $G_\C$-action. The principal $\T$-bundle $\P$ can be described as the associated principal $\T$-bundle of $\L$. The $\T$-connection $\Theta$ defines a $G_\C$-equivariant holomorphic structure on the bundle $Z_\Delta \times \C_\tau \to Z_\Delta$, while it has the canonical $G_\C$-equivariant holomorphic structure. According to Assumption \ref{assumption:simple-connectedness and classification of line bundles} (ii) there is a $G_\C$-invariant gauge transformation which sends our holomorphic structure to the canonical one. Therefore the space of holomorphic sections $H^0(\XD,\L)$ is isomorphic to the $G_\C$-equivariant holomorphic sections of the $Z_\Delta \times \C_\tau \to Z_\Delta$ and 
\[
 \Q(\XD) = \dim \bigl\{\!\ \text{holomorphic}\ f:Z_\Delta \to \C_\tau \mid f(z \cdot g) = f(z) \cdot g \bigr\}.
\]

The following lemma completes the proof of the theorem.
\begin{lem}
 Let $\sigma:\R^d \to \Hom_\Z(N^\vee,\R)$ be the linear map induced by $\beta^\vee:N^\vee \to (\Z^d)^\vee$. We naturally identify $\Hom_\Z(N^\vee,\R)$ with $N\otimes_\Z\R$ so that the domain of the dual map $\sigma^\vee$ is $(N\otimes_\Z\R)^\vee$. 
Set $\tau'= \sum_{\nu=1}^d c_\nu \vb e^\nu$ and denote by $\Delta'$ the polytope $\sigma^\vee(\Delta) + \tau'$ in $(\R^d)^\vee$. Then the identity 
 \[
 \sharp(\Delta' \cap (\Z^d)^\vee) = \dim \bigl\{\!\ \text{holomorphic}\ f:Z_\Delta \to \C_\tau \mid f(z \cdot g) = f(z) \cdot g \bigr\}
 \]
 holds. 
\end{lem}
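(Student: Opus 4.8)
The plan is to show that both sides equal $\sharp\bigl\{a\in\Z_{\geq 0}^d\bigm|\beta^\DG(a)=\tau\bigr\}$, where $a=(a_1,\dots,a_d)$ is identified with $\sum_\nu a_\nu\vb e^\nu\in(\Z^d)^\vee$. For the right-hand side, I would first use that, by Proposition \ref{prop:facts on the two equivalent stacks}\ref{enu:hartogos}, the complement $\C^d\setminus Z_\Delta=\mathbb V(J_\Delta)$ is a union of coordinate subspaces of complex codimension at least $2$; Hartogs' extension theorem then makes restriction an isomorphism $\O(\C^d)\cong\O(Z_\Delta)$, and the equivariance $f(z\cdot g)=f(z)\cdot g$ survives the extension by continuity. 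So the space in question is $V_\tau:=\{f\in\O(\C^d)\mid f(z\cdot g)=f(z)\cdot g\ \text{for all}\ g\in G_\C\}$. Since $G_\C$ acts on $\C^d$ through $\rho_\C\colon G_\C\to\T_\C^d$, and $\O(\C^d)$ is spanned by the monomials $z^a$, each a $\T_\C^d$-weight vector of weight $a\in(\Z^d)^\vee$, the $G_\C$-weight of $z^a$ is $\beta^\DG(a)$: the homomorphism that $\rho_\C$ induces on character lattices is exactly $\beta^\DG\colon(\Z^d)^\vee\to\DG(\beta)$, because $\rho_\C$ is obtained by applying $\Hom_\Z(-,\T_\C)$ to $\beta^\DG$. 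On $\C_\tau$ the group $G_\C$ acts, by definition, through the character of $G_\C$ attached to $\tau\in\DG(\beta)$. Hence any $f\in V_\tau$ is a linear combination of the $z^a$ with $\beta^\DG(a)=\tau$, so $\dim V_\tau=\sharp\{a\in\Z_{\geq0}^d\mid\beta^\DG(a)=\tau\}$ (finiteness of this set is checked below).

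For the left-hand side I would make the identification $\Hom_\Z(N^\vee,\R)=N\otimes_\Z\R$ explicit: under it $\sigma$ is $\beta\otimes_\Z\id_\R$, so $\sigma^\vee=\beta^\vee\otimes_\Z\id_\R$ is injective with image $\ker(\beta^\DG\otimes_\Z\id_\R)$ by the exactness of (\ref{eqn:exact sequence for DG(beta)}). The same computation on character lattices, differentiated, gives $w^\nu=\rho^\vee(\vb e^\nu)=\beta^\DG(\vb e^\nu)$, so Assumption \ref{assumption: for stacky polytope}(ii) yields $\tau=\sum_\nu c_\nu w^\nu=(\beta^\DG\otimes_\Z\id_\R)(\tau')\in\DG(\beta)$; hence $\Delta'=\sigma^\vee(\Delta)+\tau'$ lies in the affine fibre $(\beta^\DG\otimes_\Z\id_\R)^{-1}(\tau)=\tau'+\image\sigma^\vee$. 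For $\eta\in(N\otimes_\Z\R)^\vee$ the $\nu$-th coordinate of $\sigma^\vee(\eta)+\tau'$ is $\pair{\eta,\beta(\vb e_\nu)\otimes 1}+c_\nu$, which is nonnegative exactly when the $\nu$-th defining inequality of $\Delta$ in (\ref{eqn:polytope-as-intersection-of-hyperplanes}) holds; combined with the injectivity of $\sigma^\vee$ this shows
\[
 \Delta'=\bigl\{x\in(\R^d)^\vee\bigm|(\beta^\DG\otimes_\Z\id_\R)(x)=\tau,\ \pair{x,\vb e_\nu}\geq 0\ (\nu=1,\dots,d)\bigr\}.
\]
Intersecting with $(\Z^d)^\vee$ and writing $x=\sum_\nu a_\nu\vb e^\nu$ turns the first condition into the lattice equation $\beta^\DG(a)=\tau$ and the second into $a\in\Z_{\geq0}^d$; as $\Delta'$ is compact (a linear image of the compact polytope $\Delta$, translated), this set is finite, and it coincides with the index set of the previous paragraph. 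Comparing cardinalities proves the lemma.

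Almost everything here is diagram chasing through the various dual lattices, and is routine. The one place needing genuine care is keeping the sign conventions coherent, so that the $G_\C$-weight of $z^a$ comes out $+\beta^\DG(a)$ while $\C_\tau$ carries the character $+\tau$ — equivalently, verifying $w^\nu=\beta^\DG(\vb e^\nu)$ — since this is exactly what links the constants $c_1,\dots,c_d$ of (\ref{eqn:polytope-as-intersection-of-hyperplanes}) to the weight $\tau$ and puts $\Delta'$ in the correct fibre of $\beta^\DG\otimes_\Z\id_\R$. I do not expect a deeper obstacle.
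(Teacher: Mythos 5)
Your proof is correct and follows essentially the same route as the paper: Hartogs extension of an equivariant map from $Z_\Delta$ to all of $\C^d$, decomposition into monomials $z^\alpha$ whose $G_\C$-weight must equal $\tau$, and identification of that index set with $\Delta'\cap(\Z^d)^\vee$. The only difference is that you derive the description $\Delta'=\bigl\{x\in(\R^d)^\vee \mid \pair{x,\vb e_\nu}\geq 0\ (\forall\nu),\ \rho^\vee(x)=\tau\bigr\}$ directly from (\ref{eqn:polytope-as-intersection-of-hyperplanes}) and the exact sequence (\ref{eqn:exact sequence for DG(beta)}), whereas the paper quotes it from \cite[Lemma 17]{sakai13:_delig_mumfor}.
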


Note that $\beta^\vee:N^\vee \to (\Z^d)^\vee$ and $\sigma^\vee:(N\otimes_\Z\R)^\vee \to (\R^d)^\vee$ are both injective and $\beta^\vee$ coincides with the lattice map induced by $\sigma^\vee$. Therefore $\sigma^\vee + \tau'$ sends $\Delta$ to $\Delta'$ bijectively. If $c_\nu \in \Z$ for all $\nu$, then $\Delta \cap N^\vee$ is bijectively sent to $\Delta' \cap (\Z^d)^\vee$. If $c_\nu \not\in\Z$ for some $\nu$, then $\Delta'\cap (\Z^d)^\vee$ is empty.

We prove the above lemma by modifying the proof for toric manifolds \cite[§3]{hamilton08}. 

\begin{proof}
The Hartogs' theorem and Proposition \ref{prop:facts on the two equivalent stacks} \ref{enu:hartogos} imply that a $G_\C$-equivariant holomorphic map from $Z_\Delta$ to $\C_\tau$ extends to a $G_\C$-equivariant holomorphic map $f$ from $\C^d \to \C_\tau$. Consider its Taylor expansion
\[
 f(z) = \sum_{\alpha} A_\alpha z^\alpha
 \quad
 (A_\alpha \in \C).
\]
Here $\alpha \in \Z_{\geq 0}^d$ is a multi-index. We regard a multi-index $\alpha$ as an element of $(\Z^d)^\vee$ through the formula $\alpha = (\alpha_1,\dots,\alpha_d) = \alpha_1 \vb e^1 +\dots+ \alpha_d \vb e^d$. Then for $\xi \in \g_\C$ we have 
\begin{align*}
 f(z \cdot \exp(\xi)) \cdot \exp(-\xi)
 &=
 \Bigl(\sum_\alpha A_\alpha \prod_{\nu=1}^d \bigl(z_\nu e^{2\pi\i\pair{\vb e^\nu,\dot\rho_\C(\xi)}}\bigr)^{\alpha_\nu} \Bigr)
 e^{-2\pi\i\pair{\tau,\xi}} \\
 &= 
 \sum_\alpha A_\alpha z^\alpha e^{2\pi\i\pair{\sum_{\nu=1}^d \alpha_\nu \vb e^\nu,\dot\rho_\C (\xi)}}
 e^{-2\pi\i\pair{\tau,\xi}} \\
 &= 
 \sum_\alpha A_\alpha z^\alpha e^{2\pi\i\pair{\rho^\vee(\alpha)-\tau,\xi}}.
\end{align*}
Therefore the set 
\[
 \bigl\{z^\alpha \!\ \big|\!\ \alpha \in (\Z^d)^\vee,\ \pair{\alpha,\vb e_\nu} \geq 0\ (\forall \nu),\ \rho(\alpha)=\tau \bigr\}
\]
gives a $\C$-basis of the space of $G_\C$-equivariant holomorphic maps $\C^d \to \C_\tau$.

On the other hand the polytope $\Delta'$ is explicitly given by $  \Delta' = \{ s \in (\R^d)^\vee \mid \pair{s,\vb e_\nu} \geq 0\ (\forall \nu),\ \rho^\vee(s) = \tau \}$ \cite[Lemma 17]{sakai13:_delig_mumfor}. Thus $\Delta' \cap (\Z^d)^\vee = \bigl\{ \alpha \in (\Z^d)^\vee \big| \pair{\alpha,\vb e_\nu} \geq 0\ (\forall \nu),\ \rho^\vee(\alpha)=\tau \bigr\}$.
\end{proof}

\subsection{Examples}
\label{subsec:examples}

In this subsection we consider a stacky polytope which defines a weighted projective stack $\CP(a,ab)$. 

Let $a$ and $b$ be positive integers. Set $N = \Z_a \oplus \Z$, where $\Z_a := \Z/a\Z$. We denote by $\bar n$ for the image of $n \in \Z$ via the natural projection $\Z \to \Z_a$. We have a natural isomorphism $(N \otimes_\Z \R)^\vee \to \R^\vee$ and we identify $\R^\vee$ with $\R$. Set $\Delta = [0,c]$ ($c>0$). Define a homomorphism of $\Z$-module $\beta:\Z^2 \to N$ by 
\[
 \beta(\vb e_1) = (\bar 1, -b) \quad\text{and}\quad \beta(\vb e_2) = (\bar 0, 1).
\]
The triple $(N,\Delta,\beta)$ gives a stacky polytope and the polytope $\Delta$ is described by the form 
\[
 \Delta = \{ \eta \in \R \mid
 \pair{\eta,\beta(\vb e_1) \otimes 1} \geq -c_1,\
 \pair{\eta,\beta(\vb e_2) \otimes 1} \geq -c_2
 \}
\]
Here $c_1 = bc$ and $c_2 = 0$.

We construct explicitly the stack $\XD$ associated to the stacky polytope $(N,\Delta,\beta)$. Defining a homomorphism of $\Z$-modules $\dot\rho:\Z \to \Z^2$ by $1 \mapsto (a,ab)$, we obtain a short exact sequence of $\Z$-modules 
\[
 \xymatrix{
 0 \ar[r] & \Z \ar[r]^{\dot\rho} & \Z^2\ar[r]^\beta & N \ar[r] & 0 \\
 }.
\]
Thus $\DG(\beta) = \Z^\vee$ and therefore $G = \Hom_\Z(\DG(\beta),\T) = \T$. Applying the contravariant functor $\Hom_\Z(-,\T)$, we have the short exact sequence of tori
\[
 \xymatrix{
 0 \ar[r] &
 G \ar[r]^{\rho} &
 \T^2 \ar[r]^(.3){\sigma} & 
 \Hom_\Z(N^\vee,\T) \ar[r] & 
 0.
}
\]
%Since $\rho:G(=\T) \to \T^2$ sends $[t]$ to $([at],[abt])$, 
The weights of the $G$-action are $w^1=\rho^\vee(\vb e^1) =a$ and $w^2=\rho^\vee(\vb e^2) =ab$. The moment map of the $G$-action on $\C^2$ is given by 
\[
 \mu:\C^2 \to \R; \ (z_1,z_2) \mapsto \pi a|z_1|^2 + \pi ab|z_2|^2
\]
and $\tau = c_1w^1 + c_2w^2 = abc$ is a regular value of $\mu$. Therefore the stack $[\lm/G]$ is the weighted projective stack $\CP(a,ab)$.

Now Assumption \ref{assumption: for stacky polytope} (i) holds while (ii)---the existence of prequantisation---is equivalent to $abc \in \Z$. 

\begin{enumerate}
 \item $\CP(1,2)$ ($a=1$, $b=2$)

       The stack $\XD$ admits a prequantisation if and only if $2c \in \Z$.

       \begin{tabular}[t]{c|cccccccc}
	$c$       & 1/2 & 1 & 3/2 & 2 & 5/2 & 3 & 7/2 & 4 \\ \hline
	$\tau$    & 1   & 2 & 3   & 4 &   5 & 6 & 7   & 8 \\ \hline
	$\Q(\XD)$ & 1   & 2 & 2   & 3 & 3   & 4 & 4   & 5
       \end{tabular}
 \item $\CP(1,3)$ ($a=1$, $b=3$)

       The stack $\XD$ admits a prequantisation if and only if $3c \in \Z$.

       \begin{tabular}[t]{c|ccccccccc}
	$c$       & 1/3 & 2/3 & 1 & 4/3 & 5/3 & 2 & 7/3 & 8/3 \\ \hline
	$\tau$    & 1   & 2   & 3 & 4   & 5   & 6 & 7   & 8   \\ \hline
	$\Q(\XD)$ & 1   & 1   & 2 & 2   & 2   & 3 & 3   & 3
       \end{tabular}

 \item $\CP(1,b)$ ($a=1$)

       The stack $\XD$ admits a prequantisation if and only if $bc (=\tau) \in \Z$ and we have 
       \[
       \Q(\XD)
       = \lfloor c \rfloor + 1 
       = \Biggl\lfloor \frac{\tau}{b} \Biggr\rfloor +1.
       \]
       Here $\lfloor \ \rfloor$ is the floor function.

 \item $\CP(2,2)$ ($a=2$, $b=1$)

       The stack $\XD$ admits a prequantisation if and only if $2c (=\tau) \in \Z$. However $c_1 \in \Z$ is equivalent to $c \in \Z$ (i.e. $\tau \in 2\Z$). Therefore if $c\not\in\Z$, then $\Q(\XD)=0$ even though $\XD$ admits a prequantisation.

       \begin{tabular}[t]{c|ccccccccc}
	$c$       & 1/2 & 1 & 3/2 & 2 & 5/2 & 3 & 7/2 & 4 \\ \hline
	$\tau$    & 1   & 2 & 3   & 4 & 5   & 6 & 7   & 8 \\ \hline
	$\sharp(\Delta \cap N^\vee)$ & 1   & 2 & 2   & 3 & 3   & 4 & 4   & 5 \\ \hline
	$\Q(\XD)$ & 0   & 2 & 0   & 3 & 0   & 4 & 0   & 5
       \end{tabular}

 \item $\CP(3,3)$ ($a=3$, $b=1$)

       The stack $\XD$ admits a prequantisation if and only if $3c (=\tau) \in \Z$. However $c_1 \in \Z$ is equivalent to $c \in \Z$ (i.e. $\tau \in 3\Z$). Therefore if $c\not\in\Z$, then $\Q(\XD)=0$ even though $\XD$ admits a prequantisation.

       \begin{tabular}[t]{c|ccccccccc}
	$c$       & 1/3 & 2/3 & 1 & 4/3 & 5/3 & 2 & 7/3 & 8/3 \\ \hline
	$\tau$    & 1   & 2   & 3 & 4   & 5   & 6 & 7   & 8   \\ \hline
	$\sharp(\Delta \cap N^\vee)$ & 1 & 1 & 2 & 2 & 2 & 3 & 3 & 3 \\ \hline
	$\Q(\XD)$ & 0   & 0 & 2   & 0 & 0   & 3 & 0   & 0
       \end{tabular}

 \item $\CP(a,a)$ ($b=1$)

       The stack $\XD$ admits a prequantisation if and only if $ac (=\tau) \in \Z$. However $c_1 \in \Z$ is equivalent to $c \in \Z$. Therefore if $c \not\in\Z$, then $\Q(\XD)=0$ even though $\XD$ admits a prequantisation. If $c\in\Z$ (i.e. $\tau \in a\Z$), then 
       \[
       \Q(\XD) = c + 1 = \frac{\tau}{a} +1.
       \]
\end{enumerate}

\subsection{Relation between $\Q(\XD)$ and the Hirzebruch--Riemann--Roch theorem}

In the rest of the paper we stick with the complex analytic atlas $Z_\Delta \to \XD$ and a holomorphic line bundle $\L \to \XD$ so that we can regard $\XD$ as a Deligne--Mumford stack over the category of schemes over $\C$ and $\L$ as an algebraic line bundle. 

In the theory of (ordinary) quantisations, we define the quantisation space by the virtual vector space 
\[
 \sum_q (-1)^q H^q(\XD,\L).
\]
The Euler characteristic $\chi(\XD,\L)$ of $\L$ is the dimension of the virtual vector space by definition. 

If $\XD$ is representable (i.e. $\XD$ is a symplectic toric manifold), then we can see $H^q(\XD,\L)=0$ for $q>0$ by applying the Demazure vanishing theorem. Therefore $\Q(\XD)=\chi(\XD,\L)$ holds. Unfortunately the Demazure vanishing theorem for the toric stack $\XD$ has not been established yet.

Applying the Kodaira vanishing theorem \cite{matsuki05:_kawam_viehw_kodair}, we can however see the identity $\Q(\XD)=\chi(\XD,\L)$ for many cases. In particular if $\XD$ is Fano (e.g. a weighted projective stack), then the identity holds and we can apply the Hirzebruch--Riemann--Roch theorem for stacks \cite{edidin12:_rieman_roch_delig_mumfor} to compute $\Q(\XD)$.

We check directly for the weighted projective stacks $\CP(a,ab)$ ($a=1$ or $b=1$) that $\Q(\XD)$ agrees with the Euler characteristic of $\L$. See §\ref{subsec:examples} for the notation for the stack and see Edidin \cite{edidin12:_rieman_roch_delig_mumfor} for the notation relating to the $K$-theory and the Hirzebruch--Riemann--Roch theorem.

Denote by $\ell$ the $G_\C$-equivariant line bundle $\C_1 \to \pt$. Then the representation ring $R(\C^\times)$ ($=K_0(\C^\times\!,\pt)$) is isomorphic to $\Z[\ell^{\pm}]$ and the bundle $\C_\tau \to \pt$ is $\ell^\tau$ in $R(\C^\times)$ ($\tau \in \Z = \Z^\vee$). The terminal map $Z_\Delta \to \pt$ induces the module map $R(G_\C) \to K_0(\C^\times\!,\C^2\setminus\{0\}) = K_0(\XD)$ and $[\L] = \ell^\tau$ in $K_0(\XD)$.

Applying the localisation theorem \cite[Theorem 2.7]{thomason87:_algeb_k}, we can see that the module map is surjective and its kernel is generated by $(\ell^{a}-1)(\ell^{ab}-1)$. Thus $K_0(\XD) \cong \Z[\ell^\pm]/\pair{(\ell^{a}-1)(\ell^{ab}-1)}$. The algebra $K_0(\XD)\otimes\C$ is supported at the points $1,\zeta,\zeta^2,\dots,\zeta^{ab-1}$ of $\C^\times = \Spec(R(\C^\times)\otimes\C)$, where $\zeta$ is a primitive $ab$-th root of unity.

%Denoting by $\lambda$ the equivariant first Chern class of $\ell$, we have $H^*(B\C^\times,\C) \cong \C[\lambda]$. The Todd class of $\XD$ is given by $\Td(\XD) = 1 + \frac{a+ab}{2}\lambda$.

The case of $\CP(1,b)$ ($a=1$) we have 
\begin{align*}
 \chi(\XD,\ell^\tau)
 &= 
 \int_{\CP(1,b)} \ch (\ell^\tau) \Td(\CP(1,b))
 +
 \sum_{k=1}^{b-1} \int_{\CP(b)} 
 \frac{\ch(\zeta^{k\tau}\ell^\tau)}{\ch(1-(\zeta^k\ell)^{b-1})}
 \Td(\CP(b))
 \\
 &=
 \frac{2\tau + 1 + b}{2b}
 +
 \frac{1}{b}\sum_{k=1}^{b-1} \frac{\zeta^{k\tau}}{1-\zeta^{k(b-1)}}
 \\
 &=
 \Biggl\lfloor\frac{\tau}{b}\Biggr\rfloor + 1.
\end{align*}

The case of $\CP(a,a)$ ($b=1$), we have 
\begin{align*}
 \chi(\XD,\ell^\tau)
 &= \sum_{k=0}^{a-1}
 \int_{\CP(a,a)} \ch((\zeta^k\ell)^\tau)\Td(\CP(a,a))
 %	     = \sum_{k=0}^{a-1}
 %	     \zeta^{k\tau} \int_{\CP(a,a)} (1+\tau\lambda)(1+a\lambda)
 = \sum_{k=0}^{a-1}
 \zeta^{k\tau}  \frac{\tau+a}{a^2}
 =
 \begin{cases}
  \frac{\tau}{a}+1 & \tau \in a\Z, \\
  0 & \tau \not\in a\Z.
 \end{cases}
\end{align*}

%Note that we have the coase moduli space $X_\Delta$ of $\XD$ is given by the toric variety determined by the fan $\Sigma_\Delta$ \cite{borisov05:_chow_delig_mumfor}.

%\begin{remark}
%We may define the vector space $\Q(\XD)$ as the vertual vector space $\sum_q (-1)^q H^q(\XD,\L)$. The virtual vector space $\Q(\XD)$ is known as the quantisation of $\XD$ (when $\XD$ is representable). If we work on the category of the schemes over $\C$, the coase moduli space $X_\Delta$ of $\XD$ is given by the toric variety determined by the fan $\Sigma_\Delta$ \cite{borisov05:_chow_delig_mumfor}. Therefore $X_\Delta$ is a complete and projective variety of finite type. Applying the Kodaira vanishing theorem \cite{matsuki05:_kawam_viehw_kodair}, we can see that $H^q(\XD,\L)$ vanishes if $q>0$.
%\end{remark}

\paragraph{Acknowledgement}
The author thanks Reyer Sjamaar for raising the problem about the Danilov's theorem for toric stacks several years ago. The author also thanks Benjamin Collas for many discussions about Deligne--Mumford stacks. This work is supported by Professor Michael Weiss' Humboldt Professorship.

%{\small
%\bibliographystyle{plain}
%\bibliography{references.bib}
%}

\medskip
{\em
\noindent
Mathematisches Institut, WWU Münster \newline
Einsteinstrasse 62 \newline
48149 Münster \newline
GERMANY

\noindent
\tt E-mail: sakai@blueskyproject.net
}
\end{document}